\numberwithin{equation}{section}
\numberwithin{figure}{section}
\begin{document}
\journalname{Journal of Optimization Theory and Applications}
\title{Quadratic Programming over Linearly Ordered Fields: Decidability and
Attainment of Optimal Solutions}
\titlerunning{Quadratic Programming over Linearly Ordered Fields}
\author{Dmytro O. Plutenko}
\authorrunning{D. O. Plutenko}
\institute{Dmytro O. Plutenko 
\at
Institute of Physics of the NAS of Ukraine \\
Prospekt Nauky 46, Kyiv, 03028, Ukraine \\
\email{dmytro.plutenko@gmail.com}}
\date{January 25, 2026}
\maketitle
\begin{abstract}
Classical existence theorems and solution methods for quadratic programming
traditionally rely on the analytical properties of real numbers, specifically
compactness and completeness. These tools are unavailable in general
linearly ordered fields, such as the field of rational numbers or
non-Archimedean structures, rendering standard analytical proofs insufficient
in these general algebraic settings. In this paper, we establish a
unified algebraic framework for the decidability of indefinite quadratic
programming subject to linear constraints over general linearly ordered
fields. We prove a generalized Eaves' theorem, demonstrating that
if a quadratic function --- encompassing convex, non-convex, or degenerate
(linear) cases --- is bounded from below on a polyhedron, the minimum
is attained within the field itself, regardless of topological completeness.
Our approach replaces classical analytical arguments with algebraic
induction on dimension and polyhedral decomposition. Based on this
foundation, we propose an exact, deterministic algorithm within the
Blum--Shub--Smale model of computation that decides boundedness
and computes a global minimizer using only field operations. We show
that the problem is solvable in finite time via a recursive search
over orthant-restricted facets. Finally, we note that linearly constrained
quadratic programming represents the maximal class of polynomial optimization
problems where exact solutions are structurally guaranteed within
the original field, thereby demarcating the algebraic boundary of
exact optimization over ordered structures.

\keywords{Quadratic programming, Linearly ordered fields, Existence of optimal solutions, Generalized Eaves' theorem, Blum–Shub–Smale model}
\subclass{90C20 \and 90C26 \and 12J15 \and 68Q05}
\end{abstract}

\section{Introduction}

The origins of mathematical programming are deeply intertwined with
the dawn of cybernetics and the early development of computational
theory~\cite{Dantzig+1963}. Historically, the fundamental existence
theorems for both linear programming (LP)~\cite{GoldmanTucker}
and quadratic programming (QP) were established within the analytical
framework of the field of real numbers $\mathbb{R}$. The cornerstone
results of this era include the Frank--Wolfe theorem~\cite{Frank-Wolfe},
the direct existence proof by Blum and Oettli~\cite{BlumOettli1972},
and Eaves' landmark theorem~\cite{Eaves} establishing that any quadratic
function bounded from below on a polyhedron attains its minimum. These
classical proofs rely heavily on the topological properties of the
Euclidean space --- specifically, the compactness of closed bounded
sets and the Weierstrass extreme value theorem.

However, the formulation of a mathematical programming problem ---
defined by a system of linear inequalities and a polynomial objective
--- does not inherently require the Dedekind completeness or the
Archimedean property of the real numbers. A problem instance can be
posed over any linearly ordered field (LOF), such as the field of
rational numbers $\mathbb{Q}$ or the field of surreal numbers~\cite{conway2001numbers}.
In this broader algebraic setting, classical analytical proofs fail:
a bounded closed set in $\mathbb{Q}^{n}$ is not necessarily compact
due to the field's incompleteness. Consequently, the infimum of an
objective function may reside in the \textquotedbl gaps\textquotedbl{}
of the field, making the minimum unattainable within the original
structure.

The logical transition from $\mathbb{R}$ to more general fields was
partially addressed by the Tarski--Seidenberg transfer principle~\cite{Tarski1951}.
This principle ensures that first-order properties of $\mathbb{R}$
are preserved in all real closed fields, providing a theoretical foundation
for quantifier elimination algorithms~\cite{Basu2006Algorithms}.
In the context of optimization, quantifier elimination allows for
the automated decision of existence and the computation of optimal
values. However, real closed fields do not cover the entire spectrum
of LOFs. Specifically, the field $\mathbb{Q}$ lacks the intermediate
value property and the existence of roots for all non-negative elements,
placing it outside the scope of the Tarski--Seidenberg principle.

The mathematical landscape of LP has reached a state of foundational
maturity where its core theorems are not only generalized to LOFs
but have also been exhaustively validated through formal computer
verification. This state of completion, evidenced by rigorous treatments
in various proof assistants, confirms that the linear theory is fully
compatible with the abstract axiomatic framework of ordered structures~\cite{Allamigeon2019,LP-Duality-AFP}.
In stark contrast, QP has not undergone a similar systematic generalization.
While the linear case is considered a resolved frontier, the fundamental
possibility of extending QP results to the LOF setting remains largely
unexplored in existing literature. This work addresses this disparity,
focusing not merely on the computational procedures, but on proving
that the generalization of QP to linearly ordered fields is mathematically
viable, thereby extending the boundaries of axiomatic optimization
beyond the linear programming domain~\cite{dvorak2025}.

\noindent \textbf{Our Contribution.} The contribution of this paper
is focused on two primary objectives. First, we provide a theoretical
bridge between classical analysis and ordered algebra by establishing
a generalized Eaves' theorem for general linearly ordered fields.
By utilizing an induction scheme, we demonstrate that the existence
of an optimal solution is not a \textquotedbl topological accident\textquotedbl{}
of the real numbers but rather an intrinsic property of quadratic
forms on polyhedral sets.

Second, we we introduce a constructive recursive algorithm within
the Blum--Shub--Smale~\cite{BlumShubSmale1989} framework that
serves as a decision procedure for the three fundamental statuses
of a QP instance. This move from analytical tools to exact algebraic
procedures allows for a complete characterization of the problem without
the need for Dedekind completeness.

Finally, it is important to note that investigating the existence
and attainment of a minimum for higher-degree polynomial optimization
problems is generally futile within the LOF framework. Unlike the
linear and quadratic cases, where attainment is structurally guaranteed
even in the absence of completeness, the stationary conditions for
higher-order polynomials typically lead to non-linear equations whose
solutions may reside outside the original field. Consequently, QP
represents the maximal class of polynomial optimization problems where
exact solutions are structurally guaranteed within the underlying
field, effectively demarcating the algebraic boundary of exact optimization
over ordered structures.

\section{Preliminaries}

The primary goal of this section is to establish the geometric and
algebraic foundations required for our analysis. We operate in the
most general setting of optimization, avoiding assumptions of topological
completeness (such as the existence of limits or suprema) inherent
to real analysis. Instead, we rely strictly on the algebraic axioms
of a field and a total ordering.

To ensure clarity and consistency with established literature, we
adopt standard terminology from polyhedral geometry (see, e.g., Schrijver~\cite{Schrijver1986}).
However, we explicitly note that while classical texts often frame
these concepts within the context of real ($\mathbb{R}$) or rational
($\mathbb{Q}$) numbers, the structural results invoked here ---
specifically regarding implicit equalities, affine hulls, and dimension
reduction --- rely exclusively on finite algebraic operations (such
as Gaussian elimination). Consequently, these standard definitions
and properties remain fully valid for any linearly ordered field.

\subsection{Linearly Ordered Fields}

Throughout this paper, let $\mathbb{F}$ denote a\emph{ linearly ordered
field}. By definition, $\mathbb{F}$ is a field $(\mathbb{F},+,\cdot)$
equipped with a binary relation $\le$ that satisfies the axioms of
a \emph{total order} (reflexivity, antisymmetry, transitivity, and
totality) and is compatible with the field operations via the addition
and multiplication axioms~\cite{prestel1984lectures}. We emphasize
that we do not assume $\mathbb{F}$ to be a real closed field (i.e.,
polynomial equations may not have roots in $\mathbb{F}$), nor do
we assume it to be Archimedean. Our results thus hold for any structure
satisfying these algebraic axioms, including the rational numbers
$\mathbb{Q}$, the field of hyperreal numbers $\mathbb{R}^{*}$~\cite{NonStandardAnalysisRobinson},
or the field of surreal numbers~\cite{conway2001numbers}.

\subsection{\label{subsec:Polyhedra-and-implicit}Polyhedra and Implicit Equalities}

A \emph{polyhedron} $P\subseteq\mathbb{F}^{n}$ is defined as the
intersection of a finite number of closed half-spaces. Algebraically,
this corresponds to a system of $m$ linear inequalities:

\[
P=\{x\in\mathbb{F}^{n}:Ax\le b\},
\]

where $A\in\mathbb{F}^{m\times n}$ and $b\in\mathbb{F}^{m}$. In
accordance with the geometric definition of a half-space, we assume
that each row $a_{i}$ of the matrix $A$ is a non-zero vector, which
is a standard convention in polyhedral geometry (see, e.g., Schrijver~\cite[Sec.~7.2]{Schrijver1986}).

Following the structural decomposition described by Schrijver~\cite[Sec.~8.1]{Schrijver1986},
we classify the inequalities based on their behavior over $P$.

The system may contain inequalities that are satisfied as equalities
for every point in $P$. These are termed \emph{implicit equalities}.
Let $I^{=}$ denote the set of indices of such rows, and let $A^{=}x\le b^{=}$
represent the corresponding subsystem. The set of solutions to the
system of equations $A^{=}x=b^{=}$ defines the \emph{affine hull}
of $P$, denoted $\text{aff}(P)$.

The remaining inequalities, indexed by $I^{+}=\{1,\dots,m\}\setminus I^{=}$,
constitute the subsystem denoted by $A^{+}x\le b^{+}$. By definition,
for every inequality $a_{i}^{T}x\le b_{i}$ in this subsystem, there
exists at least one point $x\in P$ such that $a_{i}^{T}x<b_{i}$.

An inequality $a_{k}^{T}x\le b_{k}$ is called \emph{redundant} with
respect to the system if removing it does not alter the set of points
$P$. 

The \emph{dimension} of the polyhedron, $\dim(P)$, is defined as
the dimension of its affine hull. Since the affine hull is uniquely
determined by the system of implicit equalities $A^{=}x=b^{=}$ ---
which arises purely from the linear dependencies within the system
$Ax\le b$ --- the concept of dimension is well-defined and consistent
within the algebraic framework of any linearly ordered field $\mathbb{F}$.

\subsection{Full-Dimensionality and Interior Points}

To facilitate the analysis of quadratic functions on facets, it is
necessary to distinguish between polyhedra that have an interior and
those that are flattened into a lower-dimensional subspace.
\begin{definition}
[Full-Dimensionality]  A polyhedron $P\subseteq\mathbb{F}^{n}$ is
called \emph{full-dimensional} if $\dim(P)=n$. This is equivalent
to the condition that the system of implicit equalities is empty and
$P$ is not an empty set.
\end{definition}
Algebraically, full-dimensionality corresponds to the existence of
a \emph{strict interior point}. We say $x_{0}\in P$ is an interior
point if it satisfies all defining inequalities strictly: 
\[
Ax_{0}<b.
\]
 Conversely, if $P$ is not full-dimensional, no such point exists.
\begin{proposition}
[Dimension Reduction] Any non-empty polyhedron $P$ of dimension
$k<n$ is affinely isomorphic to a full-dimensional polyhedron $P'\subseteq\mathbb{F}^{k}$. 
\end{proposition}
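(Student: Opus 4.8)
The plan is to realize the affine hull $\text{aff}(P)$ as the image of a single injective affine parametrization $\phi : \mathbb{F}^k \to \mathbb{F}^n$, and then to transport the defining inequality system of $P$ through $\phi$. Since $P$ is non-empty, fix any $x_0 \in P \subseteq \text{aff}(P)$. By the discussion of Section~\ref{subsec:Polyhedra-and-implicit}, the affine hull is the solution set of the implicit-equality subsystem, so $\text{aff}(P) = x_0 + L$ where $L = \{v \in \mathbb{F}^n : A^= v = 0\}$ is a linear subspace of dimension $k = n - \operatorname{rank}(A^=)$. Using Gaussian elimination over $\mathbb{F}$ --- a purely finite algebraic procedure valid in any field --- I would compute a basis $u_1, \dots, u_k$ of $L$, assemble it as the columns of a matrix $U \in \mathbb{F}^{n \times k}$ of full column rank, and define the affine map $\phi(t) = x_0 + U t$.

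The second step is to verify that $\phi$ is an affine isomorphism onto $\text{aff}(P)$ and to exhibit its inverse explicitly. Because $U$ has full column rank, Gaussian elimination again produces a left inverse $U^{\dagger} \in \mathbb{F}^{k \times n}$ with $U^{\dagger} U = I_k$; setting $\psi(x) = U^{\dagger}(x - x_0)$ gives an affine map with $\psi \circ \phi = \operatorname{id}_{\mathbb{F}^k}$, and $\phi \circ \psi = \operatorname{id}$ on $\text{aff}(P)$ since every point there is of the form $x_0 + Ut$. Thus $\phi$ is an affine bijection $\mathbb{F}^k \to \text{aff}(P)$ with affine inverse $\psi$. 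I then define $P' = \psi(P) = \{ t \in \mathbb{F}^k : A(x_0 + Ut) \le b \} = \{ t : (AU)\, t \le b - A x_0 \}$; this is the intersection of finitely many half-spaces, hence a polyhedron in $\mathbb{F}^k$, and $\phi$ restricts to a bijection $P' \to P$. The equivalence $A x \le b \iff (AU)t \le b - Ax_0$ for $x = \phi(t)$ uses only field arithmetic together with the order-compatibility axioms, so no completeness or Archimedean hypothesis enters.

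It remains to prove $P'$ is full-dimensional, which I regard as the crux. Non-emptiness is immediate, as $0 = \psi(x_0) \in P'$. By the Full-Dimensionality definition, it then suffices to show $P'$ has no implicit equalities. Suppose some retained inequality $c^{T} t \le d$, arising as the row $c^{T} = a_i^{T} U$, $d = b_i - a_i^{T} x_0$ for an index $i \in I^{+}$, held with equality throughout $P'$. Applying the bijection $\phi$, this says $a_i^{T} x = b_i$ for every $x \in P$, placing $i$ in $I^{=}$ and contradicting $i \in I^{+}$. The rows indexed by $I^{=}$ pull back to the trivial relation $0 \le 0$ (since $a_i^{T} U = 0$ and $a_i^{T} x_0 = b_i$ on $\text{aff}(P)$) and may be discarded; what remains carries no implicit equality, so $\dim(P') = k$ and $P'$ is full-dimensional. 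The only delicate point throughout is maintaining that $\phi$ is a bijection onto the \emph{affine hull} rather than merely an injection into $\mathbb{F}^n$: this correspondence is exactly what transfers implicit equalities of $P'$ back to those of $P$ and thereby forces the absence of implicit equalities in the reduced system.
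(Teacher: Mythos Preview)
Your proof is correct and follows essentially the same route as the paper: both use Gaussian elimination over $\mathbb{F}$ on the implicit-equality subsystem to parametrize $\text{aff}(P)$ by $\mathbb{F}^k$ and then pull the remaining inequalities back through this parametrization. Your argument is in fact more complete than the paper's one-line sketch, since you explicitly verify that $P'$ has no implicit equalities (hence is full-dimensional), a point the paper leaves implicit; the only cosmetic loose end is that rows $a_i^T U$ with $i \in I^+$ can in principle vanish (when $a_i$ lies in the row space of $A^=$ but $b_i$ is slack), and these trivially satisfied zero-row constraints should be discarded alongside the $I^=$ rows to match the paper's convention that defining rows are non-zero.
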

\begin{proof}
This follows from the standard Gaussian elimination procedure over
$\mathbb{F}$. By expressing $n-k$ variables in terms of the $k$
free variables using the implicit equalities, we can project $P$
onto $\mathbb{F}^{k}$ using only field operations. \hspace*{\fill}\qed
\end{proof}

\subsection{Facets and Formal Facets}

We adopt the definition of a \emph{facet} as presented in the classic
work of Schrijver~\cite{Schrijver1986}. In particular, this definition
implies that if a polyhedron is described by a system of inequalities
containing no redundant constraints, then every linear constraint
(excluding implicit equalities) corresponds to exactly one facet.
In this case, the facet is simply the intersection of the polyhedron
with the hyperplane defined by the corresponding equality.

In contrast, we introduce the concept of a \emph{formal facet}, which
is defined relative to the specific algebraic description of the polyhedron
rather than its intrinsic geometry.
\begin{definition}
[Formal Facet] $P=\{x\in\mathbb{F}^{n}:Ax\le b\}$. For each row
index $i$, the set 
\[
F_{i}=P\cap\{x\in\mathbb{F}^{n}:a_{i}^{T}x=b_{i}\}
\]
 is defined as the \emph{formal facet} corresponding to the $i$-th
inequality.
\end{definition}
Thus, while a geometric facet is unique, a formal facet depends on
the representation: if the $i$-th inequality is redundant, the corresponding
formal facet might be empty or a face of lower dimension (or empty
set).

\section{Decoupled Diagonal Decomposition of Quadratic Functions\label{sec:Decoupled-diagonal-decomposition}}

In this section, we present a decoupled diagonal decomposition of
quadratic functions. We remark that while the optimization results
in subsequent sections rely on the ordering of $\mathbb{F}$, the
algebraic decomposition derived here requires only field operations.
Therefore, this lemma holds for \emph{any} field of characteristic
distinct from 2, as established by Lang~\cite{Lang2002Algebra},
regardless of whether the field is linearly ordered.
\begin{lemma}
[Decoupled Diagonal Decomposition]\label{lem:Decouple_Diagonal_Decomposition}
\end{lemma}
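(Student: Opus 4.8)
The plan is to reduce any quadratic function $f(x) = x^{T}Qx + c^{T}x + d$, with $Q \in \mathbb{F}^{n\times n}$ symmetric, to a canonical form in which the curvature directions (those carrying genuine quadratic terms) are completely separated from a residual linear term supported on the kernel of $Q$. Since we work over a field of characteristic distinct from $2$, the scalar $\tfrac{1}{2}$ is available, and this is the only arithmetic ingredient beyond the field operations that the construction requires.

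First I would diagonalize the symmetric matrix $Q$ by congruence, using Lagrange's method (symmetric Gaussian elimination). Concretely, one produces an invertible matrix $S \in \mathbb{F}^{n\times n}$ with $S^{T}QS = \operatorname{diag}(\lambda_{1},\dots,\lambda_{r},0,\dots,0)$, where $\lambda_{i} \neq 0$ and $r = \operatorname{rank}(Q)$. The substitution $x = Sy$ then turns $f$ into $\sum_{i=1}^{r}\lambda_{i}y_{i}^{2} + \tilde{c}^{T}y + d$ with $\tilde{c} = S^{T}c$. The entire elimination uses only additions, multiplications, and divisions in $\mathbb{F}$, so it is valid over any field of characteristic $\neq 2$, as invoked via Lang~\cite{Lang2002Algebra}, and in particular over any linearly ordered field.

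Next I would complete the square in each curvature direction. For $1 \le i \le r$ I rewrite $\lambda_{i}y_{i}^{2} + \tilde{c}_{i}y_{i} = \lambda_{i}\bigl(y_{i} + \tfrac{\tilde{c}_{i}}{2\lambda_{i}}\bigr)^{2} - \tfrac{\tilde{c}_{i}^{2}}{4\lambda_{i}}$, which is the step that forces the characteristic hypothesis through the factor $\tfrac{1}{2}$. Setting $u_{i} = y_{i} + \tilde{c}_{i}/(2\lambda_{i})$ for $i \le r$ and $u_{j} = y_{j}$ for $j > r$ is an invertible affine change of coordinates. After it, the only surviving linear term is $\sum_{j>r}\tilde{c}_{j}u_{j}$, which involves exclusively the kernel coordinates; the quadratic and linear parts are thus decoupled, yielding $f = \sum_{i=1}^{r}\lambda_{i}u_{i}^{2} + w(u) + \gamma$ with $w$ a linear form supported on $u_{r+1},\dots,u_{n}$ and $\gamma \in \mathbb{F}$ the accumulated constant.

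The main obstacle is the diagonalization step when every diagonal entry of the current symmetric block vanishes while some off-diagonal entry $Q_{ij}$ does not: Lagrange's method then has no pivot to complete a square against. The resolution is the standard symmetric substitution $x_{i} \mapsto x_{i} + x_{j}$ (a congruence by an elementary matrix), which produces the diagonal entry $2Q_{ij} \neq 0$ precisely because the characteristic is not $2$. Handling this degenerate pivot, and verifying that the composite change of variables remains invertible and affine over $\mathbb{F}$, is the only delicate point; everything else is bookkeeping of field operations.
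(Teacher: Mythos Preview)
Your proposal is correct and follows essentially the same approach as the paper: diagonalize $Q$ by congruence via Lagrange's method (citing Lang), then complete the square in each direction with nonzero diagonal entry so that the residual linear term is supported on the kernel of $\Lambda$. The paper simply invokes the diagonalization result without spelling out the degenerate-pivot case you treat, and records the completed squares as a translation by a vector $v$ rather than as a further change of variables, but these are cosmetic differences.
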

Let $f:\mathbb{F}^{n}\to\mathbb{F}$ be a quadratic function defined
by: 
\[
f(x)=x^{T}Qx+c^{T}x+\gamma,
\]
 where $Q\in\mathbb{F}^{n\times n}$ is symmetric, $c\in\mathbb{F}^{n}$,
and $\gamma\in\mathbb{F}$.

There exists a non-singular matrix $S\in\mathbb{F}^{n\times n}$,
a diagonal matrix $\Lambda\in\mathbb{F}^{n\times n}$, vectors $v,u\in\mathbb{F}^{n}$,
and a scalar $\gamma'\in\mathbb{F}$ such that: 
\begin{equation}
f(x)=(x-v)^{T}S^{T}\Lambda S(x-v)+u^{T}S(x-v)+\gamma',\label{eq:decoupled_form}
\end{equation}
 subject to the decoupling condition: 
\[
\Lambda u=\mathbf{0}.
\]
 This implies that the support of $u$ is disjoint from the support
of $\Lambda$ (i.e., $u_{i}\neq0$ only if $\Lambda_{ii}=0$ ). We
refer to Eq.~(\ref{eq:decoupled_form}) as the \emph{decoupled diagonal
form} of $f$.
\begin{proof}
First, we invoke the standard diagonalization result (e.g., \cite{Lang2002Algebra}).
There exists a non-singular matrix $S$ such that $Q=S^{T}\Lambda S$,
where $\Lambda=\text{diag}\left(\Lambda_{11},\Lambda_{22}\dots,\Lambda_{nn}\right)$.
Substituting $y=Sx$, the function becomes $f\left(y\right)=y^{T}\Lambda y+\tilde{c}^{T}y+\gamma$,
where $\tilde{c}=(S^{-1})^{T}c$. 

We define the shift vector $v'\in\mathbb{F}^{n}$ and the linear term
$u\in\mathbb{F}^{n}$ component-wise. For indices $i$ where $\Lambda_{ii}\neq0$,
we eliminate the linear term via completing the square. For indices
where $\Lambda_{ii}=0$, the linear term is retained: 
\[
\begin{cases}
v'_{i}=-\frac{\tilde{c}_{i}}{2\Lambda_{ii}},\quad u_{i}=0 & \text{if }\Lambda_{ii}\neq0,\\
v'_{i}=0,\quad u_{i}=\tilde{c}_{i} & \text{if }\Lambda_{ii}=0.
\end{cases}
\]
 The scalar $\gamma'$ is adjusted accordingly: 
\[
\gamma'=\gamma-\frac{1}{4}\sum_{i:\Lambda_{ii}\neq0}\Lambda_{ii}^{-1}\tilde{c}_{i}^{2}.
\]
 The function in $y$-coordinates takes the form: 
\[
f(y)=(y-v')^{T}\Lambda(y-v')+u^{T}y+\gamma'.
\]
 Note that by construction, $u^{T}v'=0$, so $u^{T}y=u^{T}(y-v')$.
Finally, setting $v=S^{-1}v'$ allows us to rewrite the expression
in terms of $x$, yielding Eq.~(\ref{eq:decoupled_form}). \hspace*{\fill}\qed
\end{proof}

\begin{corollary}
[Single-Direction Linear Term]\label{cor:Single_direcion_linear_term}
There exists a choice of the transformation matrix $S$ in Lemma~\ref{lem:Decouple_Diagonal_Decomposition}
such that the linear term vector $u$ has at most one non-zero component.
\end{corollary}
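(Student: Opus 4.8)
The plan is to exploit the residual freedom in the diagonalizing transformation $S$ that was left unused in the proof of Lemma~\ref{lem:Decouple_Diagonal_Decomposition}. Recall that in the decoupled diagonal form the linear term $u$ is supported entirely on the index set $Z=\{i:\Lambda_{ii}=0\}$, the \emph{kernel directions} of $\Lambda$. My strategy is to post-compose $S$ with an invertible map that acts nontrivially only on the coordinates indexed by $Z$: since $\Lambda$ vanishes on this block, such a map leaves the diagonal matrix $\Lambda$ intact while acting freely on $u$, and I would choose it so as to collapse $u$ onto a single coordinate.

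Concretely, after permuting coordinates so that the indices of $Z$ occupy the last $d=|Z|$ positions, I would take the new transformation to be $S_{\mathrm{new}}=RS$ with the block-diagonal matrix
\[
R=\begin{pmatrix} I & 0 \\ 0 & R_0 \end{pmatrix},
\]
where $R_0\in\mathbb{F}^{d\times d}$ is invertible. The key algebraic check is that $(R^{-1})^{T}\Lambda R^{-1}=\Lambda$: on the first block $R$ is the identity, and on the second block $\Lambda$ is the zero matrix, so conjugation by $R$ has no effect. Hence $Q=S_{\mathrm{new}}^{T}\Lambda S_{\mathrm{new}}$ with the \emph{same} diagonal $\Lambda$, and repeating the completing-the-square step of the lemma in the new coordinates $w=RSx$ produces a linear term whose restriction to $Z$ equals $(R_0^{-1})^{T}\,u|_{Z}$ and which remains zero outside $Z$.

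It then remains to choose $R_0$ appropriately. If $u|_{Z}=\mathbf{0}$ there is nothing to prove, since $u=\mathbf{0}$ already has no nonzero components. Otherwise $u|_{Z}$ is a nonzero vector in $\mathbb{F}^{d}$, and I would extend it to a basis $\{u|_{Z},\,f_2,\dots,f_d\}$ of $\mathbb{F}^{d}$; the matrix $M$ sending this basis to the standard basis is invertible and satisfies $M\,u|_{Z}=e_1$. Setting $(R_0^{-1})^{T}=M$ (equivalently $R_0=(M^{-1})^{T}$) yields a new linear term supported on the single index corresponding to $e_1$, establishing the claim.

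The only point requiring care --- and the step I would verify explicitly rather than wave through --- is that the auxiliary transformation $R_0$ does not disturb the decoupling on the complementary block: one must confirm that completing the square on the coordinates with $\Lambda_{ii}\neq0$ is wholly independent of the kernel block, so that no linear term is reintroduced outside $Z$. This follows from the block-diagonal form of $R$ together with the vanishing of $\Lambda$ on $Z$, but it is the junction where an oversight would silently break the decoupling condition $\Lambda u=\mathbf{0}$. Since every operation used --- conjugation, basis extension, matrix inversion --- is a finite field operation, the construction is valid over any field of characteristic distinct from $2$, in agreement with the hypotheses of Lemma~\ref{lem:Decouple_Diagonal_Decomposition}.
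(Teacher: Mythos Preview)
Your proof is correct and follows essentially the same approach as the paper: perform an additional invertible change of basis restricted to the kernel subspace $K=\ker(\Lambda)$ --- which leaves the diagonal matrix $\Lambda$ unchanged --- so as to align $u$ with a single coordinate direction of $K$. One minor slip worth fixing: to deduce $Q=S_{\mathrm{new}}^{T}\Lambda S_{\mathrm{new}}$ from $S_{\mathrm{new}}=RS$ you need $R^{T}\Lambda R=\Lambda$ rather than $(R^{-1})^{T}\Lambda R^{-1}=\Lambda$, though both identities hold for the same block-structure reason you give.
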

\begin{proof}
If $u=\mathbf{0}$, the condition holds trivially. If $u\neq\mathbf{0}$,
the condition $\Lambda u=\mathbf{0}$ implies that $u$ lies in the
kernel of the quadratic form. Let $K=\ker(\Lambda)$. We can perform
an additional linear transformation restricted to the subspace $K$
(which leaves the diagonal matrix $\Lambda$ invariant) to align the
first basis vector of $K$ with $u$. In this new basis, $u$ has
exactly one non-zero entry. \hspace*{\fill}\qed
\end{proof}

\begin{remark}
[Algorithmic Aspects] We highlight two properties relevant to the
algorithmic implementation:
\end{remark}
\begin{itemize}
\item \textbf{Constructivity:} The proof of Lemma~\ref{lem:Decouple_Diagonal_Decomposition}
is fully constructive. The matrix $S$ and $\Lambda$ are obtained
via a finite number of field operations, consistent with the classical
Lagrange's diagonalization algorithm~\cite[Ch.~X]{Gantmacher1959}.
Within the Blum--Shub--Smale model, such procedures are polynomial
in the dimension $n$~\cite{Blum1998}.
\item \textbf{Non-uniqueness of translation:} In the proof, the components
of the shift vector $v'$ corresponding to the kernel of $\Lambda$
can be chosen arbitrarily. Consequently, the resulting vector $v=S^{-1}v'$
is uniquely determined only on the range of $Q$. In our construction,
we set the kernel components of $v'$ to zero to obtain a solution
with minimal support, but any vector in the affine space $v+\ker(Q)$
is a valid center for the quadratic form.
\end{itemize}

\section{\label{sec:Existence-of-minima}Existence of Minima}

The existence of optimal solutions is a cornerstone of optimization
theory. For quadratic programming over real numbers ($\mathbb{R}$),
this property is well-established. Frank and Wolfe~\cite{Frank-Wolfe}
proved that a convex quadratic function bounded from below on a polyhedron
attains its minimum. Eaves~\cite{Eaves} generalized this result
to indefinite quadratic forms, demonstrating that if an indefinite
QP is bounded, an optimal solution exists.

However, Eaves' proof relies on the topological properties of $\mathbb{R}$---
specifically, the compactness of bounded closed sets (Heine--Borel
theorem) and the continuity of the objective function (Weierstrass
theorem). However, these properties do not hold in a general linearly
ordered field.

We now present an algebraic proof that extends Eaves' result to any
LOF, using induction on dimension instead of topological compactness.
\begin{theorem}
[Generalized Eaves' Theorem]\label{thm:Generalized_Eaves_Theorem}
\end{theorem}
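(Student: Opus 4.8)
The plan is to prove the statement --- that a quadratic $f$ bounded from below on a non-empty polyhedron $P\subseteq\mathbb{F}^{n}$ attains its minimum, with the infimum an element of $\mathbb{F}$ --- by induction on the intrinsic dimension $\dim(P)$, replacing the compactness argument of the classical proof with an algebraic reduction of the global infimum to the finite collection of formal facets. By the Dimension Reduction proposition I may assume throughout that $P$ is full-dimensional in $\mathbb{F}^{n}$ with $n=\dim(P)$, since an affine isomorphism carries a quadratic objective to a quadratic objective and preserves both boundedness from below and attainment. The base case $n=0$ is immediate, as $P$ is then a single point. For the inductive step the key dichotomy is whether $f$ already exhibits an attained global minimizer in the interior; if not, I will show that every interior point is dominated in value by a point on some facet $F_{i}$, which reduces the whole problem to the facets, each of dimension at most $n-1$.

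First I would put $f$ into the decoupled diagonal form of Lemma~\ref{lem:Decouple_Diagonal_Decomposition}, passing to coordinates $y=Sx$, so that $g(y)=\sum_{i}\Lambda_{ii}(y_{i}-v'_{i})^{2}+u_{j}(y_{j}-v'_{j})+\gamma'$, where by Corollary~\ref{cor:Single_direcion_linear_term} the linear part involves at most one index $j$, necessarily with $\Lambda_{jj}=0$, and where the image $SP$ is again a polyhedron whose facets pull back to the facets of $P$. I then split into cases on the signature of $\Lambda$ and the linear term. If $\Lambda$ has a negative diagonal entry, the restriction of $f$ to the line through an interior point in that coordinate direction is a downward parabola; boundedness from below forces this line to leave $P$, so $f$ attains a strictly smaller value at an endpoint lying on a facet. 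If $u\neq\mathbf{0}$, then moving along the purely linear direction $e_{j}$ decreases $g$ strictly and linearly, and the same boundedness obstruction pushes any interior point to a facet. If $\Lambda$ is positive semidefinite with $u=\mathbf{0}$, then $f$ is convex with unconstrained minimizer set the affine subspace $M=\{y:y_{i}=v'_{i}\text{ for all }i\text{ with }\Lambda_{ii}\neq0\}$: when $M\cap P\neq\emptyset$ the minimum is attained immediately, and when $M\cap P=\emptyset$ the segment from an interior point toward its coordinate projection onto $M$ decreases $g$ by convexity and must cross a facet before reaching the infeasible minimizer.

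In every non-trivial case this yields the uniform statement that for each $x_{0}\in\operatorname{int}(P)$ there is a boundary point $x_{1}\in\bigcup_{i}F_{i}$ with $f(x_{1})\le f(x_{0})$, whence $\inf_{P}f=\min_{i}\inf_{F_{i}}f$. I would then verify that every geometric step uses only field and order operations: a segment or ray $x_{0}+td$ meets the hyperplane of the $k$-th constraint at $t=(b_{k}-a_{k}^{T}x_{0})/(a_{k}^{T}d)$, obtained by solving a single linear equation, and the selection of the first binding constraint is a minimum over finitely many such quotients, so neither an intermediate value theorem nor completeness is invoked. Boundedness enters only in the explicit form that $f(x)\ge L$ for some $L\in\mathbb{F}$, and the contradiction in the unbounded-direction subcases is produced by exhibiting a concrete parameter value, built from $L$ and the coefficients via absolute values, at which the objective drops below $L$. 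Finally, each $F_{i}$ is a polyhedron of dimension at most $n-1$ on which $f$ is a quadratic bounded from below, so the induction hypothesis supplies an attained minimizer $x^{(i)}\in F_{i}$, and the global minimum is realized at whichever $x^{(i)}$ gives the least of the finitely many values $f(x^{(i)})$.

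I expect the main obstacle to be the convex subcase in which the unconstrained minimizer set $M$ is disjoint from $P$, since this is precisely where a real-analytic proof would invoke compactness or a limit to land on the boundary; the resolution is to replace that limit by the explicit observation that the segment joining an interior point to its coordinate projection onto $M$ exits the closed polyhedron at a facet, a crossing point that exists in $\mathbb{F}$ because it solves a linear equation. A secondary point requiring care is the completeness-free treatment of the descent directions: over a non-Archimedean field one cannot simply let the parameter grow without bound, so I must present each decreasing direction by naming specific field elements at which any prescribed lower bound is breached, thereby keeping the entire argument valid over an arbitrary linearly ordered field $\mathbb{F}$.
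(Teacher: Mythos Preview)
Your proposal is correct and follows essentially the same approach as the paper's proof: induction on dimension, reduction to the full-dimensional case, application of the decoupled diagonal decomposition with the single-direction linear term, the same three-way case split (positive semidefinite with $u=\mathbf{0}$, nonzero linear term, negative diagonal entry), and reduction of the interior to the boundary via an explicitly computed facet-crossing point. The only cosmetic differences are that the paper starts the induction at $n=1$ rather than $n=0$, and in the convex subcase it works with the single translated center $\mathbf{0}$ rather than the full affine minimizer set $M$; your additional care in naming the explicit field parameter that breaches a prescribed lower bound is a nice touch but not something the paper spells out.
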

Let $\mathbb{F}$ be a linearly ordered field. Let $f:\mathbb{F}^{n}\to\mathbb{F}$
be a quadratic function and $P\subseteq\mathbb{F}^{n}$ be a non-empty
polyhedron. If $f$ is bounded from below on $P$, then $f$ attains
its minimum on $P$.
\begin{proof}
We proceed by induction on the dimension of the space, $n$.

\textbf{Base case ($n=1$):} The polyhedron $P\subseteq\mathbb{F}^{1}$
is a connected subset of the line: a point, a closed interval, a ray,
or the whole line. The function is a univariate quadratic $f(x)=ax^{2}+bx+\gamma$. 
\begin{itemize}
\item If $a>0$ (strictly convex), the global minimum is at $x^{*}=-b/(2a)$.
If $x^{*}\in P$, the minimum is attained. If $x^{*}\notin P$, the
function is monotonic on $P$ and attains its minimum at the endpoint
closest to $x^{*}$.
\item If $a<0$ (concave) or $a=0,\ b\ne0$ (linear), the minimum must be
at an endpoint. If $P$ has no endpoint in the direction of descent,
$f$ would be unbounded below, contradicting the hypothesis. Thus,
the minimum is attained at a boundary point (vertex).
\item $a=0$, $b=0$ (trivial case), the minimum is attained at any point.
\end{itemize}
\textbf{Inductive hypothesis:} Assume the theorem holds for all quadratic
functions and polyhedra in spaces of dimension $n-1$ (and lower).

\textbf{Inductive step:} Consider a polyhedron $P\subseteq\mathbb{F}^{n}$
and a bounded quadratic function $f$.

\textbf{Case 1: Reduction to full dimensionality.} If $P$ is not
full-dimensional ($\dim(P)<n$), then $P$ lies entirely within a
proper affine subspace $H$ of dimension $k<n$. By identifying $H$
with $\mathbb{F}^{k}$, we can regard $f$ restricted to $P$ as a
problem in $\mathbb{F}^{k}$. Since $k\le n-1$, the inductive hypothesis
applies, and the minimum is attained.

\textbf{Case 2: Full-dimensional case.} Assume $\dim(P)=n$. We apply
the decoupled diagonal decomposition (Lemma~\ref{lem:Decouple_Diagonal_Decomposition})
and Corollary~\ref{cor:Single_direcion_linear_term} via the coordinate
transformation $y=S(x-v)$. Since this transformation is an affine
isomorphism, it preserves the combinatorics and dimension of the polyhedron.
For notational simplicity, we denote the polyhedron in the new coordinates
$y$ simply by $P$. The objective function becomes: 
\[
f(y)=y^{T}\Lambda y+u^{T}y+\gamma',
\]
 where $\Lambda$ is diagonal, $\Lambda u=\mathbf{0}$, and $u$ has
at most one non-zero component.

First, consider the boundary $\partial P$. It consists of a finite
union of facets $\{F_{j}\}$, each being a polyhedron of dimension
$n-1$. Since $f$ is bounded from below, it is bounded on each facet.
By the inductive hypothesis, $f$ attains a minimum on each $F_{j}$.
Let $a\in\partial P$ be the point achieving the global minimum over
the boundary: 
\[
f(a)=\min_{y\in\partial P}f(y).
\]
 Now we investigate whether there exists an interior point $y\in\text{int}(P)=P/\partial P$
such that $f(y)<f(a)$. We distinguish three sub-cases:

\textbf{Subcase 2.1: Convex form ($\Lambda\succeq0$ and $u=\mathbf{0}$).}\emph{
}The function is $f(y)=y^{T}\Lambda y+\gamma'$. This is a convex
function with the global unconstrained minimum at the origin $\mathbf{0}$. 

\textbf{Subcase 2.1.1:} If $\mathbf{0}\in P$, the global minimum
is attained at $\mathbf{0}$.

\textbf{Subcase 2.1.2:} If $\mathbf{0}\notin P$, consider any interior
point $y\in P$. The line segment connecting $y$ to the origin $\mathbf{0}$
must intersect the boundary $\partial P$ at some point $z$. Due
to the convexity of the form $y^{T}\Lambda y$, the function is non-decreasing
along the ray starting from $\mathbf{0}$. Since $z$ lies on the
segment between $\mathbf{0}$ and $y$, we have $f(z)\le f(y)$. But
$z\in\partial P$, so $f(a)\le f(z)$. Thus, $f(a)\le f(y)$ for all
interior points. The minimum is attained at $a$.

\textbf{Subcase 2.2: Linear descent ($u\neq\mathbf{0}$).} By Corollary~\ref{cor:Single_direcion_linear_term},
assume the linear term is aligned with the $k$-th coordinate, i.e.,
$u_{k}\neq0$ (which implies $\Lambda_{kk}=0$). Let $e_{k}\in\mathbb{F}^{n}$
denote the $k$-th standard basis vector (having $1$ at the $k$-th
position and $0$ elsewhere). The objective function depends linearly
on the coordinate $y_{k}$.

Consider an arbitrary interior point $y^{0}\in\text{int}(P)$. We
define the strict descent direction as $d=-\text{sign}(u_{k})e_{k}$.
Consider the ray emanating from $y^{0}$ in this direction: 
\[
R=\{y\in\mathbb{F}^{n}:y=y^{0}+td,\,t\ge0\}.
\]
 For any point on this ray, the function value is $f(y^{0}+td)=f(y^{0})-|u_{k}|t$.
Since $|u_{k}|>0$, the function strictly decreases as $t$ increases.
Because $f$ is bounded from below on $P$, the ray $R$ cannot be
entirely contained within $P$ indefinitely. Therefore, there exists
a scalar $t^{*}>0$ such that the point $z=y^{0}+t^{*}d$ lies on
the boundary $\partial P$. Since $t^{*}>0$, we have $f(z)<f(y^{0})$.
As $z\in\partial P$, it follows that $f(a)\le f(z)$. Thus, $f(a)<f(y^{0})$
for any interior point $y^{0}$, implying the minimum is attained
at $a$.

\textbf{Subcase 2.3: Concave direction ($u=\mathbf{0}$ and $\exists k:\Lambda_{kk}<0$).
}The objective function is strictly concave along the direction $e_{k}$.
For any interior point $y^{0}$, at least one of the rays in the directions
$e_{k}$ or $-e_{k}$ is strictly descending. Similarly to Case 2.2,
following this direction must eventually encounter the boundary $\partial P$
at a point $z$ (otherwise, $f$ would be unbounded below on $P$),
such that $f(z)<f(y^{0})$. It follows that $f(a)\leq f(z)<f(y^{0})$,
confirming that the minimum is attained on the boundary.

\textbf{Conclusion of Case 2:} The analysis of Subcases 2.1, 2.2,
and 2.3 is exhaustive. We have demonstrated that in every scenario,
the minimization problem either is solved by an interior point (specifically,
the unconstrained center $v$ in Subcase 2.1 when it belongs to $P$)
or reduces to a minimization problem on the boundary $\partial P$.
Since the boundary is a finite union of polyhedra of dimension $n-1$,
the inductive hypothesis guarantees the attainment of the minimum
on the boundary. Thus, in all cases, a global minimum exists on $P$.
The proof is complete. \hspace*{\fill}\qed
\end{proof}

The constructive nature of the proof of Theorem~\ref{thm:Generalized_Eaves_Theorem}
yields a precise characterization of the optimal solution's location.
We summarize these structural properties in the following corollary,
which serves as the theoretical basis for our exact algorithm.
\begin{corollary}
[Location of the Minimum]\label{cor:location-of-the-minimum} Let
$f(x)=x^{T}Qx+c^{T}x+\gamma$ be a quadratic function defined on a
full-dimensional polyhedron $P$. Adopting the notation of the decoupled
diagonal decomposition (Lemma~\ref{eq:decoupled_form}), the following
holds regarding the existence and location of the global minimum:
\begin{enumerate}
\item If $Q$ is positive definite ($\Lambda\succ0$) and $v\in P$, then
the global minimum exists, is unique, and is attained at the point
$v$.
\item If $Q$ is positive definite ($\Lambda\succ0$) and $v\notin P$,
then the global minimum exists, is unique, and is attained on the
boundary $\partial P$.
\item If $Q$ is positive semi-definite (\emph{$\Lambda\succeq0$}), $u=\mathbf{0}$,
and $v\in P$, then the global minimum exists, and $v$ is a global
minimizer (possibly non-unique).
\item If $Q$ is positive semi-definite (\emph{$\Lambda\succeq0$}), $u=\mathbf{0}$,
and $v\notin P$, then the global minimum exists and is attained on
the boundary $\partial P$.
\item If $u\neq\mathbf{0}$, then either $f$ is unbounded from below on
$P$, or a global minimum exists and is attained on the boundary $\partial P$.
\item If $u=\mathbf{0}$ and $\Lambda$ has at least one negative diagonal
entry, then either $f$ is unbounded from below on $P$, or a global
minimum exists and is attained on the boundary $\partial P$.
\end{enumerate}
\end{corollary}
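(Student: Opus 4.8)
The plan is to show that this corollary is essentially a direct bookkeeping of the cases already exhausted in the proof of Theorem~\ref{thm:Generalized_Eaves_Theorem}, specialized to the full-dimensional setting and organized by the definiteness type of $Q$. Since $P$ is full-dimensional, the decoupled diagonal decomposition of Lemma~\ref{lem:Decouple_Diagonal_Decomposition} and Corollary~\ref{cor:Single_direcion_linear_term} apply, giving coordinates in which $f(y)=y^{T}\Lambda y+u^{T}y+\gamma'$ with $\Lambda$ diagonal, $\Lambda u=\mathbf{0}$, and $u$ supported on at most one coordinate; the center $v$ corresponds to the origin in $y$-coordinates. I would first record that the classification into cases (1)--(6) is exhaustive and mutually consistent by partitioning on the sign-type of $\Lambda$ together with the vanishing of $u$: the sign of $\Lambda$ splits into positive definite ($\Lambda\succ0$), positive semidefinite with a zero eigenvalue but no negative eigenvalue, and the case where some diagonal entry is negative; these combine with the dichotomy $u=\mathbf{0}$ versus $u\neq\mathbf{0}$.

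Next I would dispatch the definite and semidefinite cases (1)--(4), in which $u=\mathbf{0}$ is automatic (for $\Lambda\succ0$ the decoupling forces $u=\mathbf{0}$ since $\ker\Lambda$ is trivial, and cases (3)--(4) assume $u=\mathbf{0}$ explicitly). Here $f(y)=y^{T}\Lambda y+\gamma'$ is convex with unconstrained minimizers exactly $\{v\}+\ker(Q)$. For (1) and (2), strict convexity gives uniqueness of the unconstrained minimizer $v$; if $v\in P$ the minimum is attained there, and if $v\notin P$ the argument of Subcase~2.1.2 shows every interior point is dominated by a boundary point, so the minimum lies on $\partial P$, and strict convexity transfers uniqueness to this boundary minimizer. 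For the positive semidefinite cases (3) and (4), the same Subcase~2.1 reasoning applies verbatim: if $v\in P$ then $v$ is a global minimizer (non-unique precisely because $\ker(Q)$ may be nontrivial), and if $v\notin P$ the minimum migrates to $\partial P$; boundedness below is guaranteed here because a convex quadratic form is bounded below on all of $\mathbb{F}^{n}$.

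For the remaining two cases (5) and (6), the dichotomy ``unbounded or boundary-attained'' is exactly what the corresponding subcases of the theorem establish. In case (5), $u\neq\mathbf{0}$ produces a direction of strict linear descent $d=-\mathrm{sign}(u_k)e_k$ as in Subcase~2.2; either this ray stays in $P$ for all $t\ge0$ (so $f\to-\infty$ and $f$ is unbounded below), or it meets $\partial P$, and the boundary analysis plus the inductive attainment from the theorem locates a minimizer on $\partial P$. In case (6), a negative diagonal entry $\Lambda_{kk}<0$ gives a direction of strict concave descent along $e_k$ or $-e_k$ as in Subcase~2.3, yielding the same unbounded-or-boundary-attained conclusion. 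The only point requiring care — and the one I would flag as the main obstacle — is the uniqueness claims in (1) and (2) and the attainment-on-boundary statements: these do not follow from Theorem~\ref{thm:Generalized_Eaves_Theorem} as a black box (which only asserts existence), so I must invoke the strict convexity of $y^{T}\Lambda y$ to rule out multiple minimizers, and appeal to the explicit interior-point-domination arguments inside Subcases~2.1--2.3 (rather than the bare theorem statement) to pin the minimizer to $\partial P$ when $v\notin P$. With those structural facts in hand the corollary follows by assembling the per-case conclusions.

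\begin{proof}
Since $P$ is full-dimensional, Lemma~\ref{lem:Decouple_Diagonal_Decomposition} and Corollary~\ref{cor:Single_direcion_linear_term} furnish an affine change of coordinates $y=S(x-v)$ in which $f(y)=y^{T}\Lambda y+u^{T}y+\gamma'$ with $\Lambda$ diagonal, $\Lambda u=\mathbf{0}$, and $u$ supported on at most one index; the center $v$ is the origin in these coordinates, and the set of unconstrained critical points is $v+\ker(Q)$. The six cases partition all possibilities: the sign-type of $\Lambda$ is positive definite, positive semidefinite with nontrivial kernel, or indefinite with a negative entry, and each combines with the dichotomy $u=\mathbf{0}$ or $u\neq\mathbf{0}$.

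\emph{Cases (1)--(2).} When $\Lambda\succ0$, the kernel of $\Lambda$ is trivial, so $\Lambda u=\mathbf{0}$ forces $u=\mathbf{0}$ and $f(y)=y^{T}\Lambda y+\gamma'$ is strictly convex with the unique global minimizer $v$. If $v\in P$, the minimum is attained uniquely at $v$, giving (1). If $v\notin P$, then by the domination argument of Subcase~2.1.2, for any interior point $y$ the segment from $v$ to $y$ meets $\partial P$ at a point $z$ with $f(z)\le f(y)$; hence no interior point beats the boundary, and $f$ attains its minimum on $\partial P$. Strict convexity guarantees this boundary minimizer is unique, giving (2).

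\emph{Cases (3)--(4).} Here $\Lambda\succeq0$ and $u=\mathbf{0}$, so $f(y)=y^{T}\Lambda y+\gamma'$ is convex and bounded below on all of $\mathbb{F}^{n}$, with unconstrained minimizer set $v+\ker(Q)$. If $v\in P$, then $f(v)=\gamma'$ is the global minimum and $v$ is a global minimizer (non-unique precisely when $\ker(Q)\neq\{\mathbf{0}\}$), giving (3). If $v\notin P$, the identical Subcase~2.1.2 domination argument shows every interior point is matched by a boundary point, so the minimum is attained on $\partial P$, giving (4).

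\emph{Cases (5)--(6).} In case (5), $u\neq\mathbf{0}$; by Subcase~2.2 the ray from any interior point in the direction $d=-\mathrm{sign}(u_k)e_k$ yields $f(y^{0}+td)=f(y^{0})-|u_k|t$, which is strictly decreasing. Either this ray remains in $P$ for all $t\ge0$, so $f$ is unbounded below; or it meets $\partial P$, and then the boundary analysis together with Theorem~\ref{thm:Generalized_Eaves_Theorem} (applied to the facets, which have dimension $n-1$) attains the minimum on $\partial P$. In case (6), $u=\mathbf{0}$ and some $\Lambda_{kk}<0$; by Subcase~2.3 the function is strictly concave along $e_k$, so one of the rays in the directions $\pm e_k$ is strictly descending, and the same alternative holds: $f$ is unbounded below, or a minimum is attained on $\partial P$. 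This exhausts all cases. \hspace*{\fill}\qed
\end{proof}
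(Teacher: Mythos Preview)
Your proposal is correct and takes essentially the same approach as the paper's proof: both derive cases (1)--(4) from the convex Subcase~2.1 of Theorem~\ref{thm:Generalized_Eaves_Theorem} (with strict convexity supplying uniqueness in (1)--(2)) and derive cases (5)--(6) from the interior-point-domination arguments of Subcases~2.2 and~2.3. Your version is more explicit---for instance, spelling out that $\Lambda\succ0$ forces $u=\mathbf{0}$ via the decoupling condition $\Lambda u=\mathbf{0}$---but the logical skeleton is identical to the paper's.
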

\begin{proof}
Cases 1--4 concern convex functions bounded below by $\gamma'$ on
the entire space $\mathbb{F}^{n}$. Thus, boundedness on $P$ is automatic.
The location logic follows directly from Subcase 2.1 of Theorem~\ref{thm:Generalized_Eaves_Theorem}.
Specifically, strict convexity (Cases 1--2) implies strict inequality
$f(z)<f(y)$ along rays, ensuring uniqueness.

Cases 5 and 6 correspond to Subcases 2.2 and 2.3 of Theorem~\ref{thm:Generalized_Eaves_Theorem}.
In these scenarios, if the function is bounded from below, the proof
of Theorem~\ref{thm:Generalized_Eaves_Theorem} demonstrates that
for any interior point, there exists a descent direction leading to
the boundary. Thus, no interior point can be a global minimum. \hspace*{\fill}\qed
\end{proof}

\section{Algorithmic Decidability of Boundedness}

\subsection{Algorithmic Resolution and BSS Model}

We formalize our procedure within the framework of the Blum--Shub--Smale
model of computation \cite{BlumShubSmale1989}. In this model, the
arithmetic operations and comparisons over the field $\mathbb{F}$
are treated as primitive, unit-cost instructions. This abstraction
is particularly suitable for optimization over linearly ordered fields,
as it allows us to analyze the algorithm's complexity in terms of
the number of field operations, independent of the bit-length of the
coefficients. 

Our algorithm is deterministic and consists of a finite sequence of
field operations and branches based on the ordering of $\mathbb{F}$.
Crucially, the recursive structure ensures that for a fixed dimension
$n$, the total number of operations remains finite, thus satisfying
the termination criteria for a BSS machine over any ordered structure.

The generalized Eaves' theorem (Theorem~\ref{thm:Generalized_Eaves_Theorem})
establishes a fundamental result regarding the existence of solutions.
It implies that for any quadratic programming problem over a linearly
ordered field $\mathbb{F}$, exactly one of three mutually exclusive
scenarios holds:
\begin{enumerate}
\item The feasible set $P$ is empty (\emph{Infeasible}).
\item The quadratic function $f$ is unbounded from below on $P$ (\emph{Unbounded}).
\item The quadratic function $f$ attains a global minimum on $P$ (\emph{Optimal}).
\end{enumerate}

To make this theoretical classification constructive, we propose a
recursive procedure that explicitly determines the optimization status
and returns the minimizer if it exists. To formalize the procedure
within the Blum--Shub--Smale framework, we distinguish between the
abstract geometric objects and their algebraic encoding as data structures. 

We define a problem instance as a pair of structures $(\mathsf{P},\mathsf{f})$
over the field $\mathbb{F}$. Throughout Algorithm~\ref{alg:minqp_lof},
we use the sans-serif font to denote the data structures (the system
of constraints and the objective parameters), while the italic font
refers to the corresponding mathematical entities: 
\begin{itemize}
\item $\mathsf{P}=(A,b)$ represents the polyhedron $P=\{x\in\mathbb{F}^{n}:Ax\leq b\}$,
with $A\in\mathbb{F}^{m\times n},b\in\mathbb{F}^{m}$.
\item $\mathsf{f}=(Q,c,\gamma)$ represents the function $f(x)=x^{T}Qx+c^{T}x+\gamma$,
with $Q\in\mathbb{F}^{n\times n},c\in\mathbb{F}^{n},\gamma\in\mathbb{F}$.
\item $\mathsf{P}^{h}$ represents the sub-polyhedron $P^{h}$ restricted
to the orthant defined by $h\in\{-1,1\}^{n}$. 
\item As an exception, $\mathsf{P}'$ represents the instance associated
with the formal facet $F_{k}^{h}=P^{h}\cap\{y:a_{k}^{T}y=b_{k}\}$
(or its projection to $\mathbb{F}^{n-1}$). 
\item In the following description, $\mathsf{P}$ and $\mathsf{f}$ are
treated as dynamic objects subject to coordinate transformations and
dimensional reductions.
\end{itemize}
The algorithm \textsc{MinQP\_LOF} is designed to return a triplet
$(Status,Value,Point)$ which provides a complete characterization
of the instance: 
\begin{itemize}
\item Status\emph{=Infeasible} : Returned if $P=\emptyset$. In this case,
$Value=\infty$ and $Point=\text{null}$. 
\item Status\emph{=Unbounded}: Returned if $f$ is not bounded from below
on $P$. In this case, $Value=-\infty$ and $Point=\text{null}$.
\item Status\emph{=Optimal}: Returned if a global minimum exists. In this
case, $Value=f(x^{*})$ and $Point=x^{*}$, where$x^{*}\in P$ is
a global minimizer. 
\end{itemize}

\begin{algorithm}[htbp]
\caption{Exact Recursive Algorithm: \textsc{MinQP\_LOF}} \label{alg:minqp_lof}

\begin{algorithmic}[1]

\Require{$\mathsf{P} = (A, b)$ and  $\mathsf{f} = (Q, c, \gamma)$ over $\mathbb{F}$} 

\Ensure{Triplet $(\text{Status}, \text{Value}, \text{Point})$}

\Statex \textbf{Step 0: Preprocessing (zero-row elimination) } 
\For{each $i$-th row in $A$ where $a_i=\mathbf{0}$}
	\If{$b_i < 0$}
		\State \Return $(\text{"Infeasible"}, \infty, \text{null})$ 
	\Else 
		\State Remove constraint $i$ from $\mathsf{P}$
	\EndIf
\EndFor

\smallskip

\Statex \textbf{Step 1: Base case (dimension $n=1$)} 
\If{$n = 1$} 
    \State Solve univariate problem and 
	\Return $(\text{Status}, \text{Value}, \text{Point})$ 
\EndIf

\smallskip

\Statex \textbf{Step 2: Diagonalization of $f$} 
\State $(\mathsf{P}, \mathsf{f}) \gets \text{Apply coordinate transformation } y = S(x-v) $ \Comment{As per Lemma~\ref{lem:Decouple_Diagonal_Decomposition}}

\smallskip

\Statex \textbf{Step 3: Global optimality check}
\If{$\Lambda \succeq 0 \textbf{ and } u = \mathbf{0} \textbf{ and } \mathbf{0} \leq b$} 
\Comment{Check unconstrained minimizer}     
    \State \Return $(\text{"Optimal"}, \gamma', v)$ 
\EndIf

\smallskip

\Statex \textbf{Step 4: Branching over orthants and formal facets} 
\State $\mathcal{S} \gets \emptyset$ 
\Comment{Initialize set of minima candidates}
\For{each orthant defined by $h \in \{-1,1\}^n$}

    \State \ensuremath{\mathsf{P}^{h}\gets\left(\begin{bmatrix}A\\\text{diag}(h)\end{bmatrix},\begin{bmatrix}b\\\mathbf{0}\end{bmatrix}\right)} 
    \Comment{Sub-polyhedron $P^h$ restricted to the orthant}

    \For{each inequality with index $k$ in $\mathsf{P}^h$}

        \State Compute $M \in \mathbb{F}^{n \times (n-1)}$ and $p \in \mathbb{F}^n$ such that           
       \Statex \hskip \algorithmicindent \hskip \algorithmicindent $y = M z + p$ parameterizes the hyperplane $a_k^T y = b_k$

		\State $(\mathsf{P}', \mathsf{f}') \gets \text{Substitute } y = M z + p \text{ into } \mathsf{f} \text{ and the remaining constraints of } \mathsf{P}^h$

        \Comment{Project formal facet $F_k^h = P^h \cap \{y : a_k^T y = b_k\}$ to $\mathbb{F}^{n-1}$}

        \State $(stat, val, z^*) \gets \Call{MinQP\_LOF}{\mathsf{P}', \mathsf{f}'}$ 
        \Comment{Solve sub-problem on the $F_k^h$}
        \If{$stat = \text{"Unbounded"}$}
            \Return $(\text{"Unbounded"}, -\infty, \text{null})$ 
        \EndIf
        \If{$stat = \text{"Optimal"}$} $\mathcal{S} \gets \mathcal{S} \cup \{(val, M z^* + p)\}$ 
        \EndIf
    \EndFor
\EndFor

\smallskip

\Statex \textbf{Step 5: Final selection} 
\If{$\mathcal{S} = \emptyset$} 
		\Return $(\text{"Infeasible"}, \infty, \text{null})$ 
\EndIf 

\State Find $(val^*, y^*) \in \mathcal{S}$ with the minimum $val^*$ \Comment{Pick global minimum}
\State \Return $(\text{"Optimal"}, val^*, S^{-1}y^*+v)$

\end{algorithmic} 
\end{algorithm}

\begin{theorem}
[Correctness and Termination]\label{algo_correctness} Let $\mathbb{F}$
be a linearly ordered field. For any input $(\mathsf{P},\mathsf{f})$
defined over $\mathbb{F}$, Algorithm~\ref{alg:minqp_lof} terminates
in a finite number of steps and returns: 
\end{theorem}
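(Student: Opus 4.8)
The plan is to prove correctness and termination by induction on the ambient dimension $n$, mirroring the structure of the Generalized Eaves' Theorem (Theorem~\ref{thm:Generalized_Eaves_Theorem}). The inductive hypothesis is that for every instance of dimension at most $n-1$ the algorithm halts and returns the correct triplet, where \emph{correct} means: Status is Infeasible iff $P=\emptyset$, Unbounded iff $f$ admits no lower bound in $\mathbb{F}$ on $P$, and Optimal with $Value=\min_P f$ and $Point$ a genuine minimizer otherwise; the mutual exclusivity and exhaustiveness of the three statuses are already guaranteed by Theorem~\ref{thm:Generalized_Eaves_Theorem}. Termination I would establish first and separately: the orthant loop runs over the finite set $\{-1,1\}^n$, the inner loop over the finitely many rows of $\mathsf{P}^h$, and every recursive call is issued on a problem whose ambient dimension has dropped by exactly one (the substitution $y=Mz+p$ parameterizes an $(n-1)$-dimensional hyperplane). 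Hence the recursion tree has depth at most $n$ and finite branching, so only finitely many field operations are executed, which is exactly the BSS finiteness requirement.

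For the inductive step I would verify the non-recursive steps as routine. Step~0 is correct because a zero row $\mathbf 0^T x\le b_i$ makes $P$ empty precisely when $b_i<0$ and is redundant otherwise. Step~2 applies the affine isomorphism $y=S(x-v)$ of Lemma~\ref{lem:Decouple_Diagonal_Decomposition}, which preserves feasibility, boundedness and the optimal value, and lets us assume $f(y)=y^T\Lambda y+u^Ty+\gamma'$ with $\Lambda$ diagonal, $\Lambda u=\mathbf 0$, and $u$ supported on a single coordinate (Corollary~\ref{cor:Single_direcion_linear_term}). Step~3 returns early precisely in the situation of Subcase~2.1.1 of Theorem~\ref{thm:Generalized_Eaves_Theorem} and case~3 of Corollary~\ref{cor:location-of-the-minimum}: when $\Lambda\succeq0$, $u=\mathbf 0$, and the center of the form (the origin in $y$-coordinates) is feasible, which is detected by $\mathbf 0\le b$ since $A\mathbf 0=\mathbf 0$; there the minimum equals $\gamma'$ and is attained at $v$.

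The heart of the argument is Steps~4--5, which I would isolate as a single lemma: for the decoupled $f$ and each orthant cell $P^h=P\cap\{y:\mathrm{diag}(h)y\le\mathbf 0\}$ not covered by the early return, $f$ is bounded below on $P^h$ if and only if it is bounded below on every formal facet $F_k^h$, and in the bounded case $\min_{P^h}f=\min_k\min_{F_k^h}f$ is attained on the boundary. Granting this lemma, soundness and completeness assemble cleanly. Since $P=\bigcup_h P^h$ is a finite union, $f$ is unbounded on $P$ iff it is unbounded on some $P^h$ iff (by the lemma) it is unbounded on some $F_k^h$, which by the inductive hypothesis is exactly when some recursive call returns Unbounded. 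Every candidate in $\mathcal S$ is a value actually attained at a feasible point $S^{-1}y^*+v\in P^h\subseteq P$, so the reported minimum is an attained value, hence at least $\min_P f$; and the chain $\min_P f=\min_h\min_{P^h}f=\min_h\min_{\partial P^h}f=\min_{h,k}\min_{F_k^h}f=\min\mathcal S$, supplied by the lemma, forces equality. Finally, $\mathcal S=\emptyset$ together with no Unbounded report forces every $P^h$, and therefore $P$, to be empty.

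The main obstacle is the nontrivial direction of that lemma: unboundedness on $P^h$ must imply unboundedness on one of its formal facets. The subtlety is genuine, since a quadratic can be unbounded below along a recession direction lying in the relative interior of $P$ while remaining bounded below on every facet of the \emph{original} polyhedron --- this is precisely the defect that the orthant refinement repairs. My approach exploits two structural facts produced by the decoupled diagonalization: every descent direction may be taken to be a coordinate axis $\pm e_k$ (a concave axis with $\Lambda_{kk}<0$, or the single linear axis $e_{k_0}$ with $u_{k_0}\neq0$), and the facets of $P^h$ contributed by the orthant constraints are exactly the coordinate hyperplanes $\{y_l=0\}$, each of which contains all the remaining coordinate axes. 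I would therefore take a recession direction $d$ of $P^h$ witnessing $f\to-\infty$ and show it can be replaced by a direction supported on the descending coordinates and lying in some coordinate hyperplane, so that restricting to the corresponding orthant facet $F_l^h$ keeps both a feasible base point and the $-\infty$ behaviour while dropping the dimension; iterating isolates the descending coordinate in a univariate subproblem, where unboundedness is detected in the base case. The delicate part is ensuring that such a coordinate-hyperplane facet is nonempty (in the full-dimensional case; a degenerate $P^h$ reduces immediately through an implicit-equality facet) and that the modified direction remains in the recession cone $\{d:Ad\le\mathbf 0,\ \mathrm{diag}(h)d\le\mathbf 0\}$. I expect this to require a short argument combining the sign rigidity imposed by the orthant with the diagonal structure of $\Lambda$, and crucially no appeal to completeness of $\mathbb{F}$.
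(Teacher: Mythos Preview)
Your induction scheme, the termination argument, and the treatment of Steps~0--3 match the paper's proof essentially verbatim; you have also correctly isolated the crux as the single lemma ``$f$ bounded below on the orthant cell $P^h$ $\iff$ $f$ bounded below on every formal facet $F_k^h$, and in the bounded case the minimum lies on $\partial P^h$'' (the paper records exactly this as Lemma~\ref{lem:boundness-propagation-orthants} and Corollary~\ref{cor:global_min_facets}). Where your proposal diverges --- and where it has a real gap --- is in the proof of that lemma.

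You attack the nontrivial direction head-on: assume $f$ unbounded on $P^h$, pick a recession direction $d$ with $f\to-\infty$ along the ray, then deform $d$ into a coordinate hyperplane. The first step already needs justification over a general linearly ordered field. The existence of a single ray witnessing unboundedness of a quadratic on a polyhedron is classically extracted via compactness (minimizing sequences, extreme-ray selection), and you have not supplied a completeness-free substitute; over an arbitrary $\mathbb{F}$ it is not a primitive fact you may simply invoke. The second step --- zeroing coordinates of $d$ while staying in the recession cone $\{d:Ad\le\mathbf 0,\ \mathrm{diag}(h)d\le\mathbf 0\}$ and preserving $f\to-\infty$ --- is precisely where the original constraints $Ad\le\mathbf 0$ can couple the coordinates in ways the orthant signs alone do not control. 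You yourself flag this as ``delicate'' and defer it to a hoped-for short argument. As it stands, the lemma is not closed.

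The paper avoids both difficulties by proving the contrapositive through a slicing device (Lemma~\ref{lem:slicing}): choose a vector $s$ with all strictly negative entries, so that every nonempty slice $P^h\cap\{s^Ty=\alpha\}$ is a bounded polytope (the orthant constraints force $y_i\in[s_i^{-1}\alpha,0]$). One then eliminates a single variable $y_j$, with the index $j$ and the magnitudes of $s$ chosen case-by-case from the diagonal data $(\Lambda,u)$, so that the reduced quadratic on each slice falls under one of the boundary-attaining cases of Corollary~\ref{cor:location-of-the-minimum}. Hence the slice minimum lies on $\partial P^h$, and a uniform lower bound on the facets bounds every slice and therefore $P^h$. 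No limiting ray, no sequence, only finitely many field operations and sign comparisons --- which is exactly what makes the argument go through over an arbitrary $\mathbb{F}$.
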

\begin{enumerate}
\item \emph{Infeasible} if and only if $P=\emptyset$. 
\item \emph{Unbounded} if and only if the function $f$ is unbounded from
below on $P$. 
\item \emph{Optimal} and a point $x^{*}\in P$ if and only if a global minimum
exists, in which case $f(x^{*})\le f(x)$ for all $x\in P$. 
\end{enumerate}
\begin{remark}
[Data Structures vs. Geometric Objects]\label{rem:data_vs_geom}
It is important to distinguish between the input data structure $\mathsf{P}=(A,b)$,
which may contain trivial constraints with $a_{i}=\mathbf{0}$, and
the geometric polyhedron $P$ it represents. While our definition
of a polyhedron in Section~\ref{subsec:Polyhedra-and-implicit} requires
a standard system of non-zero rows, the algorithm is designed to accept
any $\mathsf{P}$ as valid input. 

Specifically, to ensure consistency, we map any input instance containing
conflicting rows ($a_{i}=\mathbf{0}$ and $b_{i}<0$) to the empty
polyhedron $P=\emptyset$. If no such conflicting rows exist, then
after removing all trivially satisfied rows ($a_{i}=\mathbf{0}$ and
$b_{i}\ge0$), the resulting system $(A',b')$ consists strictly of
non-zero rows for the matrix $A'$. This reduced form conforms exactly
to the definition of a polyhedron $P$ established in Section~\ref{subsec:Polyhedra-and-implicit}.
Step~0 serves as the bridge between these representations, ensuring
that all subsequent geometric analyses are performed on the correctly
reduced standard system.
\end{remark}
\FloatBarrier

\subsection{Supporting Lemmas}

The correctness of the recursive logic in Algorithm~\ref{alg:minqp_lof}
relies on several geometric properties of polyhedra. Specifically,
we must justify why the optimization status of a problem instance
can be reliably determined by examining only its lower-dimensional
facets (after partitioning it by orthants). This section establishes
the lemmas required to validate this reduction.
\begin{lemma}
\label{lem:slicing} Let $P\subseteq\mathbb{F}^{n}$ be a polyhedron
and $s\in\mathbb{F}^{n}$ be a non-zero vector. For $\alpha\in\mathbb{F}$,
define the slice $P_{\langle s,\alpha\rangle}=P\cap\{x:s^{T}x=\alpha\}$.
A function $f:\mathbb{F}^{n}\to\mathbb{F}$ is bounded from below
on $P$ if and only if there exists a uniform constant $\epsilon\in\mathbb{F}$
such that for all $\alpha$ where $P_{\langle s,\alpha\rangle}\neq\emptyset$,
the inequality $f(x)\ge\epsilon$ holds for all $x\in P_{\langle s,\alpha\rangle}$. 

\end{lemma}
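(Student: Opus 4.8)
The plan is to prove both directions of the biconditional. The statement decomposes boundedness over $P$ into a collection of slice-wise boundedness conditions, with the crucial subtlety being the word \emph{uniform}: the constant $\epsilon$ must not depend on $\alpha$. First I would handle the easy direction: if $f$ is bounded below on $P$ by some $\epsilon$, then since every nonempty slice $P_{\langle s,\alpha\rangle}$ is a subset of $P$, the same $\epsilon$ works for all of them simultaneously. This is immediate from set inclusion and requires no field-specific machinery.

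The reverse direction is the substantive one and is essentially a tautology once stated carefully: if there is a uniform $\epsilon$ such that $f(x)\ge\epsilon$ on every nonempty slice, then I would observe that every point $x\in P$ lies in exactly one slice, namely $P_{\langle s,\alpha\rangle}$ with $\alpha=s^{T}x$ (here I use that $s\ne\mathbf{0}$ only to ensure the level sets $\{x:s^{T}x=\alpha\}$ genuinely partition $\mathbb{F}^{n}$ as $\alpha$ ranges over $\mathbb{F}$, though the partition holds regardless). Since this slice is nonempty (it contains $x$), the hypothesis yields $f(x)\ge\epsilon$. As $x\in P$ was arbitrary, $f$ is bounded below by $\epsilon$ on all of $P$, completing the proof.

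The main conceptual obstacle is not any hidden analytic difficulty but rather making explicit why the quantifier order matters and why the argument survives over an arbitrary linearly ordered field. I would emphasize that the forward direction transfers a single bound downward to the slices, while the reverse direction requires the bound to be \emph{common} across all slices in order to reassemble a global bound; a family of per-slice bounds $\epsilon_{\alpha}$ that is not uniform could fail to have a lower bound in $\mathbb{F}$ precisely because $\mathbb{F}$ need not be complete, so no $\inf_{\alpha}\epsilon_{\alpha}$ is available. This is exactly the pitfall that the uniformity hypothesis is designed to avoid, and I would flag it as the reason the lemma is phrased with an explicit uniform constant rather than appealing to an infimum over the slicing parameter.

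Because the entire argument rests only on the partition of $P$ into level sets of a nonzero linear functional and on the transitivity and compatibility of the order relation $\le$ with the field structure, no use is made of compactness, completeness, or the Archimedean property. I would therefore present the proof in two short labelled paragraphs (one per implication) and explicitly note that the uniformity of $\epsilon$ is what carries the bound back and forth, so that the lemma can later be invoked to justify the orthant-and-facet reduction in Algorithm~\ref{alg:minqp_lof} without any appeal to topological notions.
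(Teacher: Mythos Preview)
Your proposal is correct and essentially identical to the paper's proof: both directions proceed exactly as you describe, with the only cosmetic difference being that the paper phrases the sufficiency direction as a proof by contradiction (pick $y\in P$ with $f(y)<\epsilon$ and derive $f(y)\ge\epsilon$ from the slice through $y$) rather than your direct argument. Your additional commentary on why the uniformity hypothesis is essential over an incomplete field is accurate and goes slightly beyond what the paper spells out.
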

\begin{proof}
\textbf{Necessity ($\implies$):} Suppose $f$ is bounded below on
$P$, i.e., there exists $\nu\in\mathbb{F}$ such that $f(x)\ge\nu$
for all $x\in P$. We simply choose $\epsilon=\nu$. Since any non-empty
slice $P_{\langle s,\alpha\rangle}$ is a subset of $P$, the inequality
$f(x)\ge\epsilon$ holds for all points in the slice.

\textbf{Sufficiency ($\impliedby$):} We proceed by contradiction.
Assume the condition holds (there exists a uniform $\epsilon$ such
that $f(x)\ge\epsilon$ on every non-empty slice), but $f$ is \emph{not}
bounded from below on $P$. By definition of unboundedness, there
exists a point $y\in P$ such that $f(y)<\epsilon$. Let $\alpha=s^{T}y$.
By construction, $y\in P_{\langle s,\alpha\rangle}$. However, according
to the hypothesis, the inequality $f(z)\ge\epsilon$ must hold for
all $z\in P_{\langle s,\alpha\rangle}$. Specifically, $f(y)\ge\epsilon$.
We have arrived at the contradiction $f(y)<\epsilon$ and $f(y)\ge\epsilon$.
Thus, our assumption was false, and $f$ is bounded from below on
$P$. \hspace*{\fill}\qed
\end{proof}

The finite termination of Algorithm~\ref{alg:minqp_lof} is guaranteed
by its recursive structure; a rigorous proof of this property is provided
within the proof of Theorem~\ref{algo_correctness}. Upon termination,
the algorithm returns one of three possible optimization statuses.
In this section, we first analyze the \emph{Infeasible} status and
establish the following lemma to prove that the algorithm correctly
identifies the emptiness of the feasible region.
\begin{lemma}
[Correctness of Infeasibility Detection]\label{lem:infeasibility_proof}
Algorithm~\ref{alg:minqp_lof} returns \emph{Infeasible} if and only
if the polyhedron $P$ represented by input $\mathsf{P}$ is empty.
\end{lemma}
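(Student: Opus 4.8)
The plan is to prove both implications simultaneously by induction on the ambient dimension $n$, mirroring the recursive structure of Algorithm~\ref{alg:minqp_lof}. The inductive hypothesis is exactly the statement of Lemma~\ref{lem:infeasibility_proof} for all dimensions strictly below $n$; invoking only this feasibility statement on the recursive calls (and not the full optimality claim of Theorem~\ref{algo_correctness}) keeps the argument self-contained and avoids circularity. For the base case $n=1$, feasibility of the univariate system is decided directly in Step~1 after the zero-row elimination of Step~0, so both directions are immediate.

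For the inductive step I would first dispose of the degenerate return points. Step~0 returns \emph{Infeasible} exactly when some row has $a_i=\mathbf{0}$ and $b_i<0$; such a row encodes the unsatisfiable constraint $0=a_i^Tx\le b_i<0$, so $P=\emptyset$, and conversely the removal of rows with $a_i=\mathbf{0}$, $b_i\ge 0$ leaves $P$ unchanged (Remark~\ref{rem:data_vs_geom}). The affine isomorphism of Step~2 preserves emptiness, and Step~3 only ever returns \emph{Optimal} (witnessed by a feasible origin), never \emph{Infeasible}. Hence the only other place an \emph{Infeasible} verdict is issued is Step~5, under $\mathcal{S}=\emptyset$. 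Reaching Step~5 with $\mathcal{S}=\emptyset$ forces every recursive facet call to have returned \emph{Infeasible}, since an \emph{Unbounded} return would have exited at Step~4 and an \emph{Optimal} return would have populated $\mathcal{S}$. Because the parameterization $y=Mz+p$ is an affine isomorphism onto the hyperplane $a_k^Ty=b_k$, the projected instance $\mathsf{P}'$ is empty iff $F_k^h$ is; so by the inductive hypothesis every formal facet $F_k^h$, over every orthant $h$ and every index $k$, must be empty.

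The crux is then a purely geometric claim: if $P\ne\emptyset$, then some formal facet $F_k^h$ is non-empty. I would use the orthant covering $P=\bigcup_{h\in\{-1,1\}^n}P^h$ (for $y\in P$, take $h_i=-\mathrm{sign}(y_i)$ to place $y$ in the closed orthant $\mathrm{diag}(h)y\le\mathbf{0}$), so that $P\ne\emptyset$ yields a non-empty $P^h$. The key structural observation is that $P^h$ is \emph{pointed}: its lineality space is $\{w:Aw=\mathbf{0},\ \mathrm{diag}(h)w=\mathbf{0}\}$, and since $\mathrm{diag}(h)$ is invertible this space is $\{\mathbf{0}\}$. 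A non-empty pointed polyhedron over any linearly ordered field possesses a vertex $y^\ast$; this follows from the finite, Gaussian-elimination-based vertex-existence argument of polyhedral theory (Schrijver~\cite{Schrijver1986}), which needs no compactness and is thus valid over $\mathbb{F}$. At $y^\ast$ at least $n$ linearly independent constraints of $\big[A;\ \mathrm{diag}(h)\big]$ are tight, so for any such index $k$ we get $y^\ast\in F_k^h\ne\emptyset$, and by the inductive hypothesis the corresponding recursive call does not return \emph{Infeasible}.

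Combining these closes both directions. For soundness, a Step~0 return gives $P=\emptyset$ directly, while a Step~5 return forces all $F_k^h$ empty, whence the contrapositive of the geometric claim gives $P=\emptyset$. For completeness, if $P\ne\emptyset$ then either Step~3 returns \emph{Optimal}, or some $F_k^h\ne\emptyset$ yields an \emph{Optimal} or \emph{Unbounded} recursive result, so the algorithm leaves Step~5 with $\mathcal{S}\ne\emptyset$ or exits earlier with \emph{Unbounded} --- in neither case returning \emph{Infeasible}. I expect the main obstacle to be the geometric claim, specifically justifying that a non-empty pointed polyhedron has a vertex over a general (possibly non-Archimedean, non-real-closed) linearly ordered field; the argument must be phrased combinatorially, relying only on the finite algebraic operations sanctioned in the Preliminaries rather than on any limiting or compactness step.
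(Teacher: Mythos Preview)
Your proof is correct and follows the same inductive scheme as the paper: same base case, same handling of Steps~0--3, and the same reduction of the Step~5 case $\mathcal{S}=\emptyset$ to the geometric claim that a non-empty $P$ must yield some non-empty formal facet $F_k^h$. The only substantive difference is how that geometric claim is justified. The paper simply asserts a general \emph{boundary property} --- any non-empty polyhedron described by at least one (non-zero-row) half-space has a non-empty formal facet --- and combines it with the orthant covering. You instead exploit structure specific to $P^h$: since $\mathrm{diag}(h)$ is invertible, $P^h$ is pointed, hence possesses a vertex over any ordered field (via the finite, Gaussian-elimination-style vertex-existence argument you cite), and a vertex necessarily lies on some formal facet. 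Both justifications are purely algebraic and valid over an arbitrary linearly ordered field; yours invokes slightly more machinery but makes the role of the orthant constraints explicit, whereas the paper's boundary property (which it states without proof) is recoverable by a single ray-shooting step in the direction of any row $a_i$ and does not actually need pointedness. The obstacle you anticipate is therefore not a real one.
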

\begin{proof}
We prove the statement by induction on the dimension $n$ of the problem.

\textbf{1. Base case ($n=1$):} The algorithm solves the univariate
problem in Step 1. Feasibility is determined by a direct check of
the interval constraints (Step 0 and Step 1). The output is correct
by inspection of the univariate cases.

\textbf{2. Inductive hypothesis:} Assume the algorithm works correctly
for dimension $n-1$.

\textbf{3. Inductive step (dimension $n$):} 

The proof relies on two fundamental properties of polyhedra over ordered
fields: 
\begin{enumerate}
\item \emph{Orthant decomposition:} $P$ is non-empty if and only if its
intersection with at least one orthant is non-empty. This follows
from the fact that the union of all $2^{n}$ closed orthants covers
$\mathbb{F}^{n}$. 
\item \emph{Boundary property:} A non-empty polyhedron $P\subseteq\mathbb{F}^{n}$
defined by $m$ closed half-spaces either coincides with the entire
space ($m=0$) or has at least one non-empty formal facet $F_{k}=P\cap\{y:a_{k}^{T}y=b_{k}\}$.
\[
P\neq\emptyset\iff(m=0)\lor(\exists k\in[m]:F_{k}\neq\emptyset).
\]
\end{enumerate}
\textbf{Necessity ($\implies$):} If the algorithm returns \emph{Infeasible},
it implies that one of the following scenarios occurred: 
\begin{enumerate}
\item Step~0 identified a trivial contradiction where $a_{i}=\mathbf{0}$
and $b_{i}<0$. According to the Remark \ref{rem:data_vs_geom} this
directly implies $P=\emptyset$.
\item Step~0 passed without identifying trivial contradictions, but the
algorithm proceeded through the subsequent steps as follows:
\begin{itemize}
\item Step~3 confirmed that the origin in the $y$-basis is not a feasible
minimizer ($\mathbf{0}\notin P$ or the optimality conditions were
not met).
\item In Step~4, every recursive call on every formal facet $F_{k}^{h}$
of every orthant returned \emph{Infeasible}. Crucially, no recursive
call returned \emph{Unbounded}, as that would have triggered an immediate
exit with the \emph{Unbounded} status.
\item Consequently, the set of candidates $\mathcal{S}$ remained empty
at the end of Step~4.
\item In Step~5, the condition $\mathcal{S}=\emptyset$ was satisfied,
and the algorithm returned \emph{Infeasible}.
\end{itemize}
\end{enumerate}
By the inductive hypothesis, the fact that all recursive calls returned
\emph{Infeasible} means that all formal facets $F_{k}^{h}$ are empty
sets. According to the boundary property and orthant decomposition,
the emptiness of all facets implies that $P=\emptyset$.

\textbf{Sufficiency ($\impliedby$):} Conversely, if $P=\emptyset$,
then all sub-problems on facets are also defined on empty sets. By
the inductive hypothesis, they return \emph{Infeasible}, leading to
$\mathcal{S}=\emptyset$ and the correct output. \hspace*{\fill}\qed
\end{proof}

\begin{lemma}
[Boundedness Propagation on Orthants]\label{lem:boundness-propagation-orthants}Let
$f(y)=y^{T}\Lambda y+u^{T}y+\gamma$ be a quadratic function in decoupled
diagonal form (where $\Lambda$ is diagonal, $\Lambda u=\mathbf{0}$,
and $u$ has at most one non-zero component). Let $P\subseteq\mathbb{F}^{n}$
be a polyhedron restricted to an orthant (i.e., defined by a system
$Ay\le b$ which includes constraints of the form $y_{i}\le0$, $i\in\left[n\right]$).
If the dimension $n\ge2$, then $f$ is bounded from below on $P$
if and only if $f$ is bounded from below on every formal facet of
$P$.
\end{lemma}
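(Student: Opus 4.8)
The plan is to prove the two directions separately. The forward implication is immediate: every formal facet $F_i$ is contained in $P$, so any lower bound for $f$ on $P$ restricts to a lower bound on each $F_i$ (empty facets being vacuous). The entire content lies in the converse, which I would establish in contrapositive form: assuming $f$ is \emph{not} bounded from below on $P$, I would exhibit a single formal facet on which $f$ is already unbounded from below. Two structural features drive the argument. First, because $\Lambda$ is diagonal and $\Lambda u = \mathbf{0}$ with $u$ supported on a single index, the objective is separable, $f(y) = \sum_{i} \phi_i(y_i) + \gamma$ with $\phi_i(t) = \Lambda_{ii} t^2 + u_i t$. Second, after relabelling signs we may assume the orthant is $\{y : y_i \le 0\}$, so that the recession cone of $P$ lies in the non-positive orthant and every orthant facet $G_i = P \cap \{y_i = 0\}$ is available as a candidate facet.

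The main mechanism is a slicing-and-induction scheme built on the two preceding results. I would induct on $n$, slicing $P$ by a coordinate hyperplane $\{y_j = \alpha\}$; each slice $S_\alpha$ is again an orthant-restricted polyhedron of dimension at most $n-1$, and its own facets are faces of facets of $P$. For each $\alpha$ with $S_\alpha \neq \emptyset$, the Generalized Eaves' Theorem (Theorem~\ref{thm:Generalized_Eaves_Theorem}) gives a dichotomy: either $f$ is unbounded from below on $S_\alpha$, in which case the inductive hypothesis (valid since the slice has dimension $n-1 \ge 2$ in the inductive step) already produces a facet of $S_\alpha$, hence a facet of $P$, carrying the unboundedness; or $f$ attains a minimum $m(\alpha)$ on $S_\alpha$. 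Since $f$ is unbounded below on $P = \bigcup_\alpha S_\alpha$, in the latter situation we must have $\inf_\alpha m(\alpha) = -\infty$. By the Location of the Minimum (Corollary~\ref{cor:location-of-the-minimum}), each minimizer lies either on the relative boundary $\partial S_\alpha$, a union of faces sitting inside facets of $P$, or at the convex (unconstrained) centre of the slice. In the boundary case, finiteness of the facet set allows a pigeonhole argument: some fixed facet $F_l$ receives minimizers with $m(\alpha)$ arbitrarily negative, so $f$ is unbounded from below on $F_l$.

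The residual, and genuinely delicate, case is when the slice minimizers are interior centres for cofinally many $\alpha$ — geometrically, the infimum is approached along a recession ray $-e_j$ running through the interior of $P$, which at first sight touches no facet. Here the separable convex structure is decisive: a \emph{strict} interior minimizer of the separable convex remainder $g(\hat y) = \sum_{i \neq j} \phi_i(y_i)$ in the open non-positive orthant can occur only if every $\phi_i$ with $i \neq j$ is identically constant (a coordinate with $\Lambda_{ii}>0$ forces the minimizer onto $\{y_i=0\}$, and $\Lambda_{ii}<0$ destroys attainment), forcing $f(y) = \phi_j(y_j) + \gamma$. In that degenerate regime $f|_{S_\alpha}$ is constant on each slice, so its minimum is equally attained at any boundary point of $S_\alpha$; since $S_\alpha$ lies in the orthant it cannot be an entire hyperplane and so has a genuine facet on which $f$ equals $\phi_j(\alpha) + \gamma$, and a final pigeonhole over $\alpha$ fixes one facet of $P$ along which $f \to -\infty$. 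I expect this interior-centre case — equivalently, in recession-cone language, an isotropic descent direction with $d^{T}\Lambda d = 0$ whose descent rate depends on the base point — to be the principal obstacle, and the orthant restriction (the availability of the facets $\{y_i = 0\}$ together with the forced degeneracy just described) to be precisely what removes it. The base case $n = 2$, where slices are one-dimensional and the lemma itself fails (for instance $f(y) = y$ on $(-\infty, 0]$ is unbounded below yet bounded on its only facet $\{0\}$), I would treat directly via the elementary edge structure of a planar orthant-restricted polyhedron.
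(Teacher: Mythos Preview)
Your outline is sound and the argument can be made to go through, but it follows a genuinely different route from the paper's proof. The paper establishes the direct implication (boundedness on every formal facet implies boundedness on $P$) \emph{without} induction on $n$. It fixes a common lower bound $\epsilon_1$ for $f$ on all facets and then slices $P$ by hyperplanes $\{y : s^T y = \alpha\}$ for a vector $s$ with \emph{all} components strictly negative --- not a coordinate direction. Because $P$ sits inside the non-positive orthant, this choice forces every non-empty slice to be a \emph{bounded} polytope. A three-way case analysis on $(\Lambda,u)$, carried out after eliminating one variable from the slice, shows that the minimum of $f$ on each slice is either attained on the slice boundary --- hence inside a facet of $P$, giving the bound $\epsilon_1$ --- or is directly bounded below by an explicit constant (the residual strictly-convex subcase). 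Lemma~\ref{lem:slicing} then gives boundedness on $P$ in one stroke.

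Your contrapositive scheme with coordinate slicing, induction, and pigeonhole is more circuitous but not wrong. One simplification you have missed: the ``interior-centre'' case you single out as the main obstacle is in fact vacuous. In the decoupled form the centre of the slice objective is $\hat y = 0$, which is the corner of the non-positive orthant and hence already lies on the coordinate facets $\{y_i = 0\}$ of the slice; Corollary~\ref{cor:location-of-the-minimum} therefore always places the slice minimizer on a formal facet of $S_\alpha$, and no separate degeneracy argument is needed. The trade-off is clear: the paper's oblique slicing buys freedom from both the induction and the ad hoc $n=2$ base case, while your coordinate slicing stays closer to the recursive structure of Algorithm~\ref{alg:minqp_lof} and to the intuition that unboundedness must propagate down a fixed facet.
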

\begin{proof}
\textbf{Necessity ($\implies$):} is trivial: if $f$ is bounded on
$P$, it is bounded on any subset of $P$.

\textbf{Sufficiency ($\impliedby$): }The cases where $\Lambda\succeq0$
and $u=\mathbf{0}$ are trivial, as the function is inherently bounded.
Similarly, if $P$ is empty or a lower-dimensional polyhedron (where
the entire set coincides with its formal facets), the statement holds
by definition. We thus focus on the case where $P$ is full-dimensional
and either $\Lambda$ has at least one negative entry or $\Lambda\succeq0$
with $u\neq\mathbf{0}$.

We rely on Lemma~\ref{lem:slicing}, which states that if there exists
a vector $s\in\mathbb{F}^{n}$ such that $f$ restricted to every
slice $P_{\langle s,\alpha\rangle}=P\cap\{y:s^{T}y=\alpha\}$ is bounded
from below by a certain value for all $\alpha$, then $f$ is bounded
on $P$. Let $\epsilon_{1}$ be the lower bound of $f$ on all formal
facets of $P$.

Our goal is to find a vector $s$ such that the function on each slice
is bounded. To ensure that every non-empty slice $P_{\langle s,\alpha\rangle}$
is a bounded polyhedron (polytope), we restrict our search to vectors
$s$ with strictly negative components ($s_{i}<0$ for all $i=1,\dots,n$).
Given that $P$ is contained in the negative orthant ($y_{i}\le0$),
this choice of $s$ implies that $y_{i}\in[s_{i}^{-1}\alpha,0]$,
and consequently, $\alpha$ must be non-negative for the slice to
be non-empty.

To analyze $f$ on a slice, we perform variable elimination. Let $y_{\{j\}}$
denote the vector $y$ with the $j$-th component removed, and $\Lambda_{\{jj\}}$
denote the matrix $\Lambda$ with the $j$-th row and column removed.
From $s^{T}y=\alpha$, we have $y_{j}=s_{j}^{-1}(\alpha-s_{\{j\}}^{T}y_{\{j\}})$.
Substituting this into $f$, we obtain: 
\begin{align*}
\tilde{f}(y_{\{j\}}) & =y_{\{j\}}^{T}\left[\Lambda_{\{jj\}}+\Lambda_{jj}s_{j}^{-2}s_{\{j\}}s_{\{j\}}^{T}\right]y_{\{j\}}\\
 & +\left[u_{\{j\}}^{T}-2\alpha s_{j}^{-2}\Lambda_{jj}s_{\{j\}}^{T}-u_{j}s_{j}^{-1}s_{\{j\}}^{T}\right]y_{\{j\}}\\
 & +\left[\alpha^{2}s_{j}^{-2}\Lambda_{jj}+\alpha u_{j}s_{j}^{-1}+\gamma\right].
\end{align*}

We select the index $j$ and the vector $s$ according to the following
cases:

\textbf{Case 1:}\emph{ $\Lambda$ has at least one negative diagonal
entry.} Let $j$ be an index such that $\Lambda_{jj}<0$. By choosing
$s_{j}$ sufficiently small in absolute value, the matrix $[\Lambda_{\{jj\}}+\Lambda_{jj}s_{j}^{-2}s_{\{j\}}s_{\{j\}}^{T}]$
becomes concave along $s_{\{j\}}$. According to Corollary~\ref{cor:location-of-the-minimum},
the minimum of $\tilde{f}$ on the bounded polytope $P_{\langle s,\alpha\rangle}$
is attained on its boundary. Since the boundary of the slice lies
on the formal facets of $P$, it follows that the inequality $f\left(x\right)\ge\epsilon_{1}$
holds for all $x$ in the slice.

\textbf{Case 2:}\emph{ $\Lambda\succeq0$, $\Lambda$ contains at
least two zero diagonal elements, and $u\neq\mathbf{0}$.} Since $u$
has exactly one non-zero component, we can choose an index $j$ such
that $\Lambda_{jj}=0$ and $u_{j}=0$. After elimination, the reduced
function $\tilde{f}$ on the slice retains a linear term $u_{k}y_{k}$
for some index $k\neq j$ where $\Lambda_{kk}=0$ and $u_{k}\neq0$.
In this direction, the function is purely linear. According to Corollary~\ref{cor:location-of-the-minimum}
(part~5), since the function contains a linear component with a zero
quadratic coefficient, the minimum on the bounded polytope $P_{\langle s,\alpha\rangle}$
must be attained on the boundary. Therefore the inequality $f\left(x\right)\ge\epsilon_{1}$
holds for all $x$ in the slice.

\textbf{Case 3:}\emph{ $\Lambda\succeq0$, only one diagonal element
$\Lambda_{jj}=0$, and $u\neq\mathbf{0}$.} In this case, necessarily
$u_{j}\neq0$. The function on the slice becomes: 
\[
\tilde{f}=y_{\{j\}}^{T}\Lambda_{\{jj\}}y_{\{j\}}-u_{j}s_{j}^{-1}s_{\{j\}}^{T}y_{\{j\}}+\left[\alpha u_{j}s_{j}^{-1}+\gamma\right].
\]
 \textbf{Subcase 3.1:} If $u_{j}s_{j}^{-1}<0$, the linear term pushes
the minimizer to the boundary of the slice according to Corollary~\ref{cor:location-of-the-minimum}
(part~2). Therefore the inequality $f\left(x\right)\ge\epsilon_{1}$
holds for all $x$ in the slice.

\textbf{Subcase 3.2:} If $u_{j}s_{j}^{-1}\ge0$, then for any $\alpha\ge0$,
the term $\alpha u_{j}s_{j}^{-1}+\gamma$ is bounded from below. Since
$\Lambda_{\{jj\}}\succ0$ (as the only zero was removed), the quadratic
part is bounded below. Consequently, the function $f$ is bounded
from below by $\gamma$ on each such slice. \hspace*{\fill}\qed
\end{proof}

The establishing of boundedness on an orthant-restricted polyhedron
leads to the following result regarding the location of the optimal
solution. Under the conditions of Lemma~\ref{lem:boundness-propagation-orthants},
the search for a global minimum can be restricted to the boundary
of $P$.
\begin{corollary}
[Location of the Minimum on Orthants] \label{cor:global_min_facets}Let
$f$ and $P$ ($n\ge2$) be defined as in Lemma~\ref{lem:boundness-propagation-orthants}.
If $P$ is non-empty and $f$ is bounded from below on $P$, then
the global minimum of $f$ on $P$ is attained on at least one of
its facets $F_{k}$: 
\[
\min_{y\in P}f(y)=\min_{k}\left(\min_{y\in F_{k}}f(y)\right).
\]
 
\end{corollary}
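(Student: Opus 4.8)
The plan is to deduce the facet formula from the existence result of Theorem~\ref{thm:Generalized_Eaves_Theorem} together with the location analysis underlying Corollary~\ref{cor:location-of-the-minimum}, with the orthant hypothesis supplying the one case that would otherwise fail. First I would verify that both sides of the asserted identity are well defined: since $P$ is non-empty and $f$ is bounded from below on $P$, Theorem~\ref{thm:Generalized_Eaves_Theorem} guarantees a global minimum $m^{*}=\min_{y\in P}f(y)$, and by Lemma~\ref{lem:boundness-propagation-orthants} together with Theorem~\ref{thm:Generalized_Eaves_Theorem} each non-empty facet also attains its minimum, so $\min_{y\in F_{k}}f(y)$ is defined (with value $+\infty$ for empty facets). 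As every facet satisfies $F_{k}\subseteq P$, the bound $m^{*}\le\min_{k}\min_{y\in F_{k}}f(y)$ is immediate, and the corollary reduces to the reverse inequality, i.e.\ to exhibiting a global minimizer on the boundary $\partial P=P\setminus\operatorname{int}(P)$. If $P$ is not full-dimensional, some defining inequality is an implicit equality, its formal facet $F_{k}$ coincides with $P$, and the identity holds trivially; I therefore assume $P$ is full-dimensional.

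Next I would carry out a case analysis following the subcases of Theorem~\ref{thm:Generalized_Eaves_Theorem}, using that $f$ is already in decoupled diagonal form with $\Lambda u=\mathbf{0}$ and $u$ supported on at most one coordinate. If $\Lambda\succeq0$ and $u=\mathbf{0}$, the function is convex with unconstrained minimizer at the origin $\mathbf{0}$, and here the orthant hypothesis is decisive: the constraints $y_{i}\le0$ are genuine rows of the system defining $P$, so no interior point satisfies any equality $y_{i}=0$. Consequently, if $\mathbf{0}\in P$ then $\mathbf{0}$ lies on every formal facet $\{y:y_{i}=0\}$, hence on $\partial P$; and if $\mathbf{0}\notin P$ the convexity argument of Subcase~2.1.2 of Theorem~\ref{thm:Generalized_Eaves_Theorem} places a minimizer on $\partial P$. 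If instead $u\neq\mathbf{0}$ or $\Lambda$ has a negative diagonal entry, then parts~5 and~6 of Corollary~\ref{cor:location-of-the-minimum} apply: under boundedness every interior point admits a strict descent direction that must reach the boundary, so no interior point is a global minimizer and $m^{*}$ is attained on $\partial P$.

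Finally, with a global minimizer located on $\partial P$, I would use that for a full-dimensional polyhedron the boundary decomposes as $\partial P=\bigcup_{k}F_{k}$ (every boundary point saturates at least one defining inequality and thus lies in some formal facet, while conversely each non-implicit $F_{k}\subseteq\partial P$). This yields
\[
m^{*}=\min_{y\in\partial P}f(y)=\min_{k}\Bigl(\min_{y\in F_{k}}f(y)\Bigr),
\]
which is precisely the claimed identity.

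I expect the convex subcase $\Lambda\succeq0$, $u=\mathbf{0}$ with $\mathbf{0}\in P$ to be the main obstacle, as it is the only scenario in which a general (non-orthant) polyhedron could place its minimizer in the interior and break the facet formula. The crux is to argue cleanly that the orthant restriction forces the unconstrained center onto the boundary; for this I must be explicit that the rows $y_{i}\le0$ are present in the constraint system and that full-dimensionality of $P$ is what licenses writing $\partial P$ as the union of its formal facets.
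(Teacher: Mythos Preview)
Your proposal is correct and follows essentially the same line as the paper's proof: the key observation in both is that the orthant constraints $y_i\le 0$ force the origin $\mathbf{0}$ onto the boundary $\partial P$, so the only case of Corollary~\ref{cor:location-of-the-minimum} that could place the minimizer in the interior is neutralized. Your write-up is more thorough than the paper's (you verify well-definedness of both sides, treat the non-full-dimensional case explicitly, and spell out $\partial P=\bigcup_k F_k$), but the underlying argument is the same.
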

\begin{proof}
By Corollary~$\ref{cor:location-of-the-minimum}$, the set $\{0\}\cup\partial P$
is guaranteed to contain a global minimizer $y^{*}$. Due to the orthant
constraints, the origin is not an interior point of $P$. Consequently,
$y^{*}$ must lie on at least one facet. \hspace*{\fill}\qed
\end{proof}

\subsection{Proof of Theorem~\ref{algo_correctness}}

We are now ready to establish the overall correctness of the proposed
method. The following proof synthesizes the geometric properties of
orthant-restricted polyhedra and the inductive structure of the algorithm's
recursive calls.
\begin{proof}
The termination of the algorithm is guaranteed by the fact that in
each recursive call (Step 4), the dimension $n$ of the problem is
strictly reduced by one. Since $n$ is finite and the number of branching
orthants ($2^{n}$) and formal facets ($m+n$) is also finite at each
level, the procedure results in a finite recursion tree.

According to Theorem~\ref{thm:Generalized_Eaves_Theorem}, exactly
one of the three statuses ---\textbf{ }\emph{Infeasible}, \emph{Unbounded},
or \emph{Optimal} --- must characterize any instance over $\mathbb{F}$.
Given that Lemma~\ref{lem:infeasibility_proof} establishes the correctness
of the \emph{Infeasible} detection, it remains to prove the following
two forward implications by induction on $n$.

\textbf{The base case $n=1$:} (Step~1) is handled by a direct algebraic
solution of the univariate quadratic problem over the feasible interval,
which correctly identifies the status and the minimizer.

\textbf{Correctness of }\textbf{\emph{Unbounded}}\textbf{ output:}
Assume the hypothesis holds for $n-1$. The algorithm returns \emph{Unbounded}
only in Step 4, when a recursive call on a formal facet $F_{k}^{h}$
returns \emph{Unbounded}. By the inductive hypothesis, $f$ is truly
unbounded from below on the subset $F_{k}^{h}\subseteq P$. Since
$f$ is unbounded on a subset of $P$, it is necessarily unbounded
on $P$.

\textbf{Correctness of }\textbf{\emph{Optimal}}\textbf{ output: }Assume
the hypothesis holds for $n-1$. The algorithm returns \emph{Optimal}
either in Step 3 (where the unconstrained stationary point is feasible)
or in Step 5. In Step 5, the status is returned only if $P\neq\emptyset$
and no formal facet was found to be unbounded. For each orthant $h$,
the boundedness of all formal facets implies the boundedness of $P^{h}$
(Lemma~\ref{lem:boundness-propagation-orthants}). Crucially, Corollary~\ref{cor:global_min_facets}
(location of the minimum on orthants) guarantees that the global minimum
of an orthant-restricted polyhedron must be attained on at least one
of its facets. Since the algorithm exhaustively collects and compares
optimal points from all formal facets into the set $\mathcal{S}$,
the selection of the smallest value in Step 5 correctly identifies
the exact global minimum of $f$ on $P$.

The proof is concluded by observing that the three statuses --- \emph{Infeasible},
\emph{Unbounded}, and \emph{Optimal} --- form a partition of all
possible states for a quadratic program over $\mathbb{F}$, as guaranteed
by Theorem~\ref{thm:Generalized_Eaves_Theorem}. Since the results
established above demonstrate that if the algorithm returns a specific
status, the problem instance necessarily possesses the corresponding
property, the mutual exclusivity of these statuses implies the converse.
Specifically, for any problem instance, Algorithm~\ref{alg:minqp_lof}
identifies its unique true optimization status and, in the case of
optimality, provides a certificate in the form of a global minimizer
$x^{*}$. The proof is complete. \hspace*{\fill}\qed
\end{proof}

\begin{remark}
[Complexity Note]The recursive structure of Algorithm~\ref{alg:minqp_lof}
mirrors the inductive proof of Theorem~\ref{thm:Generalized_Eaves_Theorem}.
Since the recursion depth is bounded by $n$ and the number of branches
is finite, termination is guaranteed. However, the branching over
orthants and facets at each step leads to a super-exponential worst-case
time complexity in terms of the dimension $n$. This behavior is consistent
with the NP-hard nature of the indefinite quadratic programming problem
and reflects the cost of an exact structural decomposition without
numerical approximations.
\end{remark}

\section{Results and Discussion}

Theorem~\ref{thm:Generalized_Eaves_Theorem} establishes a comprehensive
generalization of the classical existence theory by extending Eaves'
theorem~\cite{Eaves} to the framework of arbitrary linearly ordered
fields. This result demonstrates that the attainment of a minimum
for a bounded quadratic function is an intrinsic algebraic characteristic
of the problem's linear-quadratic structure. Such a finding clarifies
that the existence of an optimal solution does not rely on the continuity
or compactness inherent to $\mathbb{R}$, which were central to classical
analytical proofs.

In particular, a consequence of this theorem is the exhaustive classification
of any quadratic programming problem into three mutually exclusive
statuses: \emph{Infeasible} (polyhedron $P$ is an empty set), \emph{Unbounded}
(function $f$ is unbounded on $P$), or \emph{Optimal} (existence
of the global minimizer). The theorem restricts the possible outcomes
to these three cases, ensuring that if a feasible set is non-empty
and the function is bounded from below, an optimal solution must exist
within the original field $\mathbb{F}$.

The constructive nature of the proof of Theorem~\ref{thm:Generalized_Eaves_Theorem}
yields a precise characterization of the optimal solution's location,
serving as the theoretical basis for our exact algorithm. This mechanism
is rooted in the decoupled diagonal decomposition introduced in Section~\ref{sec:Decoupled-diagonal-decomposition}.
As established in Corollary~\ref{cor:location-of-the-minimum}, the
coefficients of the diagonal matrix $\Lambda$ and the linear vector
$u$ act as structural markers of the possible problem's status and
the position of the minimizer. For instance, if the decomposition
reveals a non-zero linear term $u_{k}\neq0$ for a free variable,
or if $\Lambda$ contains at least one negative diagonal element,
the problem is identified as either \emph{Unbounded} or as having
a minimum that necessarily lies on the boundary.

From an algorithmic perspective, we have proven that this classification
is decidable within the Blum--Shub--Smale model of computation.
The developed algorithm, \texttt{MinQP\_LOF}, determines the status
of the problem in a finite number of steps and, in the \emph{Optimal}
case, computes the exact coordinates of the global minimizer. Although
the computational complexity is inherently high due to the recursive
nature of the polyhedral search and the necessity of orthant decomposition,
the algorithm is guaranteed to terminate with a correct result over
any LOF.

These findings encompass the most general form of QP, including degenerate
cases of linear programming. While for LP the attainment of a minimum
over LOFs is a known result, our work provides a necessary and rigorous
extension to the indefinite quadratic case.

The boundaries of this algebraic approach become evident when considering
higher-degree polynomials or alternative algebraic structures. For
polynomial optimization of degree three or higher, the structural
existence theorem generally fails. Consider the cubic function $f(x)=x^{3}-6x$
on the interval $[0,3]\cap\mathbb{Q}$; its minimum lies at $x=\sqrt{2}$,
which is absent in $\mathbb{Q}$, hence it is not attained despite
being bounded. Similarly, the transition from fields to linearly ordered
rings disrupts attainment. In the ring $R=\mathbb{Z}[1/3]=\{a/3^{k}:a\in\mathbb{Z},k\in\mathbb{N}\}$,
which consists of integers enriched with fractions where the denominators
are powers of three, the function $f(x)=(2x-1)^{2}$ is bounded below,
yet its infimum $0$ is not attained as $1/2\notin R$. These observations
suggest that the algebraic structure of a field provides a critical
foundation for ensuring solution attainment, whereas simpler ordered
structures like rings or higher-degree models may lack the requisite
properties even for well-posed problems.

\section{Conclusions}

In this paper, we have established the decidability and attainment
of optimal solutions for quadratic programming over general linearly
ordered fields. By proving a generalized version of Eaves' theorem,
we demonstrated that the existence of a minimum is an inherent algebraic
property of quadratic forms, independent of topological completeness.
We introduced the exact recursive algorithm \texttt{MinQP\_LOF},
providing a constructive solution within the Blum--Shub--Smale model.
These findings effectively delineate the boundaries of exact solvability,
confirming that quadratic programming represents the maximal class
of polynomial optimization problems where solution attainment is structurally
guaranteed across the entire spectrum of linearly ordered fields.

\begin{small}
\section*{Acknowledgments} The author is grateful to Prof. Mykhail V. Vasnetsov for his interest in this work, his encouragement, and for facilitating the communication of these results.

\section*{Declarations}

\textbf{Funding:} This research was supported by the Institute of Physics of the National Academy of Sciences of Ukraine.

\noindent\textbf{Conflict of interest:} The author declares that he has no conflict of interest.

\noindent\textbf{Data availability:} Data sharing is not applicable to this article as no datasets were generated or analyzed during the current study.

\end{small}
\clearpage

\sloppy
\bibliographystyle{spmpsci}
\bibliography{bib}
\end{document}